\newcommand{\dx}{\,\mathrm{d}}
\newcommand{\C}{\mathbb{C}}
\newcommand{\R}{\mathbb{R}}
\newcommand{\N}{\mathbb{N}}
\newcommand{\Tmax}{T_\mathrm{max}}
\newcommand{\Id}{\mathrm{Id}}
\newcommand{\loss}{l}
\newcommand{\init}{\mathrm{init}}
\newcommand{\regularizerVector}{r}
\DeclareMathOperator*{\argmin}{argmin}
\theoremstyle{definition}
\newtheorem{definition}{Definition}[section]
\theoremstyle{plain}
\newtheorem{theorem}{Theorem}[section]
\theoremstyle{remark}
\begin{document}

\title{Total Deep Variation for Linear Inverse Problems}

\author{Erich Kobler\\
Graz University of Technology\\
{\tt\small erich.kobler@icg.tugraz.at}
\and
Alexander Effland\\
Graz University of Technology\\
{\tt\small effland@tugraz.at}
\and
Karl Kunisch\\
University of Graz\\
{\tt\small karl.kunisch@uni-graz.at}
\and
Thomas Pock\\
Graz University of Technology\\
{\tt\small pock@icg.tugraz.at}
}

\maketitle

\begin{abstract}
Diverse inverse problems in imaging can be cast as variational problems composed of a task-specific data fidelity term and a regularization term.
In this paper, we propose a novel learnable general-purpose regularizer exploiting recent architectural design patterns from deep learning.
We cast the learning problem as a discrete sampled optimal control problem, for which we derive the adjoint state equations and an optimality condition.
By exploiting the variational structure of our approach, we perform a sensitivity analysis with respect to the learned parameters obtained from different training datasets.
Moreover, we carry out a nonlinear eigenfunction analysis, which reveals interesting properties of the learned regularizer.
We show state-of-the-art performance for classical image restoration and medical image reconstruction problems.
\end{abstract}

\section{Introduction}
The statistical interpretation of linear inverse problems allows the treatment of measurement uncertainties in the input data~$z$ and missing information in a rigorous framework.
Bayes' theorem states that the posterior distribution $p(x\vert z)$ is proportional to the product of the data likelihood~$p(z\vert x)$ and the prior~$p(x)$, which
represents the belief in a certain solution~$x$ given the input data~$z$.
A classical estimator for~$x$ is given by the maximum a posterior (MAP) estimator, which in a negative log-domain amounts to minimizing the variational problem
\begin{equation}
\mathcal{E}(x)\coloneqq\mathcal{D}(x,z)+\mathcal{R}(x).
\end{equation}
Here, the data fidelity term~$\mathcal{D}$ can be identified with the negative log-likelihood~$-\log p(z\vert x)$ and the regularization term corresponds to the negative log-probability of the prior distribution$-\log p(x)$.
Assuming Gaussian noise in the data~$z$, the data fidelity term naturally arises from the negative log-Gaussian, which essentially leads to a quadratic $\ell^2$-term of the form $\mathcal{D}(x,z)\coloneqq\frac{1}{2}\Vert Ax-z\Vert_2^2$.
In this paper, we assume that $A$ is a task-specific linear operator (see~\cite{ChPo16}) such as a downsampling operator in the case of single image super-resolution.
While the data fidelity term is straightforward to model, the grand challenge in inverse problems for imaging is the design of a regularizer that captures the complexity of the statistics of natural images.
\begin{figure}
\centering
\includegraphics[width=.4\linewidth]{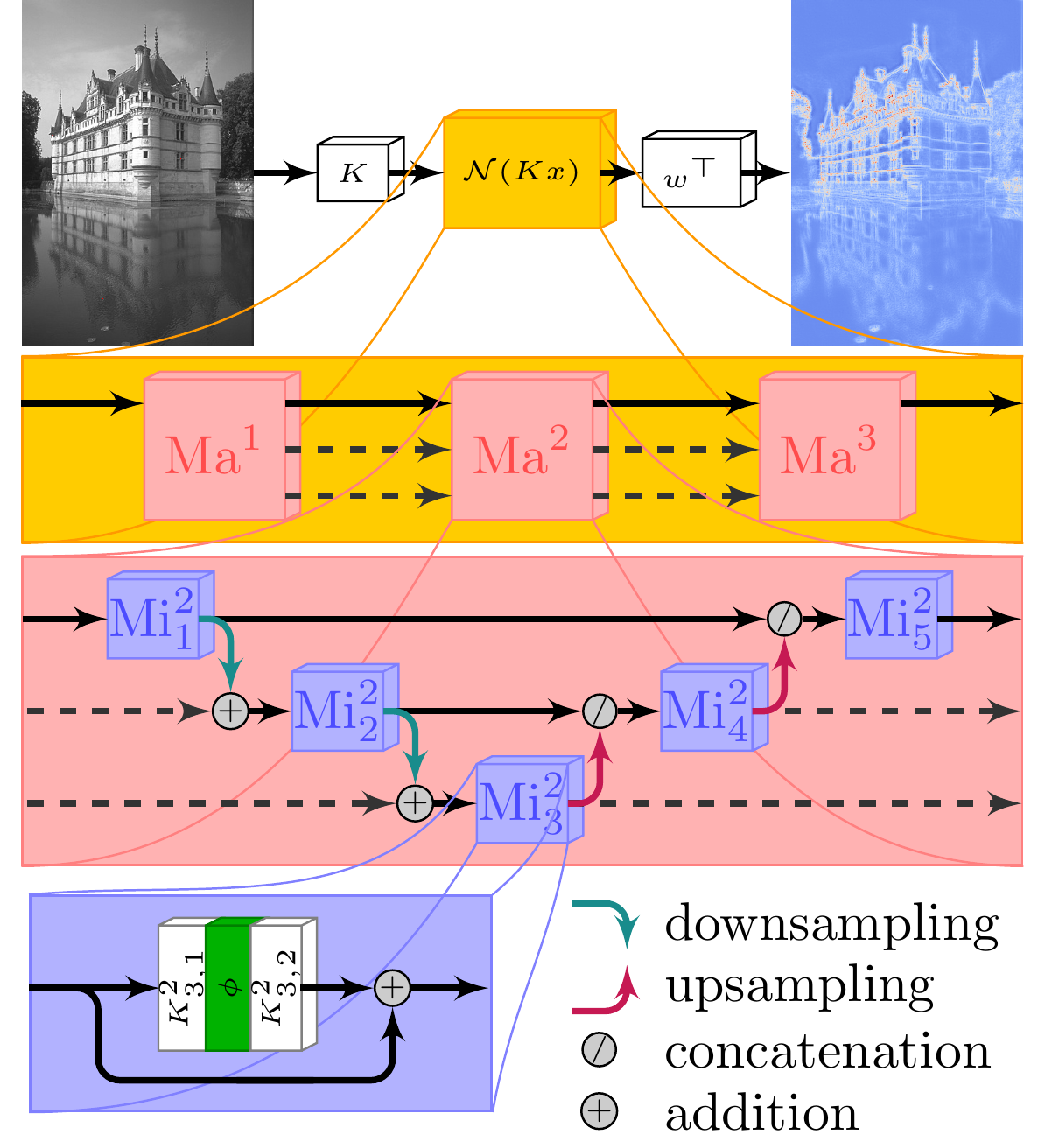}
\caption{
Visualization of the deep regularizer~$\regularizerVector$ (TDV$^3$), where colors indicate different hierarchical levels.    
On the highest level, $\regularizerVector(x,\theta)=w^\top \mathcal{N}(Kx)$ assigns to each pixel an energy value incorporating the local neighborhood.
The function~$\mathcal{N}$ (yellow) is composed of three macro-blocks (red), each representing a CNN with a U-Net type architecture.
The macro-blocks consist of five micro-blocks (blue) with a residual structure on three scales.
}
\label{fig:network}
\end{figure}

A classical and widely used regularizer is the total variation~(TV) originally proposed in~\cite{RuOsFa92}, which is based on the first principle assumption that images are piecewise constant with sparse gradients.
A well-known caveat of the sparsity assumption of TV is the formation of clearly visible artifacts known as staircasing effect.
To overcome this problem, the first principle assumption has later been extended to piecewise smooth images incorporating higher order image derivatives such as
infimal convolution based models~\cite{ChLi97} or the total generalized variation~\cite{BrKu10}.
Inspired by the fact that edge continuity plays a fundamental role in the human visual perception, regularizers penalizing the curvature of level lines have been proposed in~\cite{NiMu93,ChKa02,ChPo19}.
While these regularizers are mathematically well-understood, the complexity of natural images is only partially reflected in their formulation.
For this reason, handcrafted variational methods have nowadays been largely outperformed by purely data-driven methods.

It has been recognized quite early that a proper statistical modeling of regularizers should be based on learning~\cite{ZhWu98},
which has recently been advocated e.g.~in~\cite{LuOk18,LiSc20}.
One of the most successful early approaches is the Fields of Experts (FoE) regularizer~\cite{RoBl09}, which can be interpreted as a generalization of the total variation, but builds upon learned filters and learned potential functions.
While the FoE prior was originally learned generatively, it was shown in~\cite{SaTa09} that a discriminative learning via implicit differentiation yields improved performance.
A computationally more feasible method for discriminative learning is based on unrolling a finite number of iterations of a gradient descent algorithm~\cite{Do12}.
Additionally using iteration dependent parameters in the regularizer was shown to significantly increase the performance (TNRD~\cite{ChPo17}, \cite{Le16}).
In~\cite{KoKl17}, variational networks (VNs) are proposed, which give an incremental proximal gradient interpretation of TNRD.

Interestingly, such truncated schemes are not only computationally much more efficient, but are also superior in performance with respect to the full minimization.
A continuous time formulation of this phenomenon was proposed in~\cite{EfKo19} by means of an optimal control problem, within which an optimal stopping time is learned.

An alternative approach to incorporate a regularizer into a proximal algorithm, known as plug-and-play prior~\cite{VeSi13} or regularization by denoising~\cite{RoEl17},
is the replacement of the proximal operator by an existing denoising algorithm such as BM3D~\cite{DaFo07}.
Combining this idea with deep learning was proposed in~\cite{MeMo17,RiCh17}.
However, all the aforementioned schemes lack a variational structure and thus are not interpretable in the framework of MAP inference.

In this paper, we introduce a novel regularizer, which is inspired by the design patterns of state-of-the-art deep convolutional neural networks and simultaneously ensures a variational structure.
We achieve this by representing the total energy of the regularizer by means of a residual multi-scale network (Figure~\ref{fig:network}) leveraging smooth activation functions.
In analogy to~\cite{EfKo19}, we start from a gradient flow of the variational energy and utilize a semi-implicit time discretization, for which we derive the discrete adjoint state equation using the discrete Pontryagin maximum principle.
Furthermore, we present a first order necessary condition of optimality to automatize the computation of the optimal stopping time.
Our proposed \emph{Total Deep Variation} (TDV) regularizer can be used as a generic regularizer in variational formulations of linear inverse problems.
The major contributions of this paper are as follows:
\vspace{-1ex}
\begin{itemize} \setlength\itemsep{0em}
\item
The design of a novel generic multi-scale variational regularizer learned from data.
\item
A rigorous mathematical analysis including a sampled optimal control formulation of the learning problem
and a sensitivity analysis of the learned estimator with respect to the training dataset.
\item
A nonlinear eigenfunction analysis for the visualization and understanding of the learned regularizer.
\item
State-of-the-art results on a number of classical image restoration and medical image reconstruction problems with an impressively low number of learned parameters.
\end{itemize}
Due to space limitations, all proofs are presented in the appendix.

\section{Sampled optimal control problem}
Let $x\in\R^{nC}$ be a corrupted input image with a resolution of $n=n_1\cdot n_2$ and $C$~channels.
We emphasize that the subsequent analysis can easily be transferred to image data in any dimension.

The variational approach for inverse problems frequently amounts to computing a minimizer of a specific energy functional~$\mathcal{E}$ of the form $\mathcal{E}(x,\theta,z)\coloneqq\mathcal{D}(x,z)+\mathcal{R}(x,\theta)$.
Here, $\mathcal{D}$ is a data fidelity term and a $\mathcal{R}$ is a parametric regularizer depending on the learned training parameters~$\theta\in\Theta$,
where $\Theta\subset\R^p$ is a compact and convex set of training parameters.
Typically, the minimizer of this variational problem is considered as an approximation of the uncorrupted ground truth image.
Accordingly, in this paper we analyze different data fidelity terms of the form
$\mathcal{D}(x,z)=\frac{1}{2}\Vert Ax-z\Vert_2^2$ for fixed task-dependent $A\in\R^{lC\times nC}$ and fixed $z\in\R^{lC}$.

The proposed total deep variation~(TDV) regularizer is given by $\mathcal{R}(x,\theta)=\sum_{i=1}^n r(x,\theta)_i\in\R$, 
which is the total sum of the pixelwise deep variation defined as $\regularizerVector(x,\theta)=w^\top\mathcal{N}(Kx)\in\R^n$.
Here, $K\in\R^{nm\times nC}$ is a learned convolution kernel with zero-mean constraint (i.e. $\sum_{i=1}^{nC}K_{j,i}=0$ for $j=1,\ldots,nm$),
$\mathcal{N}:\R^{nm}\to\R^{nq}$ is a multiscale convolutional neural network and $w\in\R^q$ is a learned weight vector.
Hence, $\theta$ encodes the kernel~$K$, the weights of the convolutional layers in~$\mathcal{N}$ and the weight vector~$w$.
The exact form of the network is depicted in Figure~\ref{fig:network}.
In the TDV$^l$ network for $l\in\N$, the function~$\mathcal{N}$ is composed of~$l$ consecutive macro-blocks~$\mathbf{Ma}^1,\ldots,\mathbf{Ma}^l$ (red blocks).
Each macro-block~$\mathbf{Ma}^i$ ($i\in\{1,\ldots,l\}$) has a U-Net type architecture~\cite{RoFi15} with five micro-blocks $\mathbf{Mi}_1^i,\ldots,\mathbf{Mi}_5^i$ (blue blocks) distributed over three scales with skip-connections on each scale.
In addition, residual connections are added between different scales of consecutive macro-blocks whenever possible.
Finally, each micro-block~$\mathbf{Mi}_j^i$ for $i\in\{1,\ldots,l\}$ and $j\in\{1,\ldots,5\}$ has a residual structure defined as
$\mathbf{Mi}_j^i(x)=x+K_{j,2}^i\phi(K_{j,1}^i x)$ for convolution operators~$K_{j,1}^i$ and~$K_{j,2}^i$.
We use a smooth log-student-t-distribution of the form $\phi(x)=\frac{1}{2\nu}\log(1+\nu x^2)$, which is an established model for the statistics of natural images~\cite{HuMu99} and has the properties $\phi'(0)=0$ and $\phi''(0)=1$.
All convolution operators $K_{j,1}^i$ and $K_{j,2}^i$ in each micro-block correspond to $3\times3$ convolutions with $m$~feature channels and no additional bias.
To avoid aliasing, downsampling and upsampling are implemented incorporating $3\times 3$ convolutions and transposed convolutions with stride~$2$ using a blurring of the kernels proposed by~\cite{Zh19}.
Finally, the concatenation maps $2m$~feature channels to $m$~feature channels using a single $1\times 1$ convolution.

A common strategy for the minimization of the energy~$\mathcal{E}$ is the incorporation of
a \emph{gradient flow}~\cite{AmGi08} posed on a finite time interval~$(0,T)$, which reads as
\begin{align}
\dot{\tilde{x}}(t)=&-\nabla_1\mathcal{E}(\tilde{x}(t),\theta,z)=f(\tilde{x}(t),\theta,z)\label{eq:originalGradientFlow}\\
\coloneqq&-A^\top(A\tilde{x}(t)-z)-\nabla_1\mathcal{R}(\tilde{x}(t),\theta)
\end{align}
for $t\in(0,T)$, where $\tilde{x}(0)=x_\init$ for a fixed \emph{initial value} $x_\init\in\R^{nC}$.
Here, $\tilde{x}:[0,T]\to\R^{nC}$ denotes the differentiable \emph{flow of $\mathcal{E}$} and $T>0$ refers to the \emph{time horizon}.
We assume that $T\in[0,\Tmax]$ for a fixed $\Tmax>0$.
To formulate optimal stopping, we exploit the reparametrization $x(t)=\tilde{x}(tT)$, which results for $t\in(0,1)$ in the equivalent gradient flow formulation
\begin{equation}
\dot{x}(t)=Tf(x(t),\theta,z),\qquad x(0)=x_\init.\label{eq:gradientFlow}
\end{equation}
Note that the gradient flow inherently implies a variational structure (cf.~\eqref{eq:originalGradientFlow}).

Following~\cite{EHa19,LiCh17,LiHa18}, we cast the training process as a \emph{sampled optimal control problem} with control parameters~$\theta$ and~$T$.
For fixed~$N\in\N$, let $(x_\init^i,y^i,z^i)_{i=1}^N\in(\R^{nC}\times\R^{nC}\times\R^{lC})^N$ be a collection of~$N$ triplets of initial-target image pairs, and observed data independently drawn from a task-dependent fixed probability distribution.
In additive Gaussian image denoising, for instance, a ground truth image~$y^i$ is deteriorated by noise~$n^i\sim\mathcal{N}(0,\sigma^2)$
and a common choice is $x_\init^i=z^i=y^i+n^i$.
Let $\loss:\R^{nC}\to\R_0^+$ be a convex, twice continuously differentiable and coercive (i.e.~$\lim_{\Vert x\Vert_2\to\infty}\loss(x)=+\infty$) function.
We will later use $\loss(x)=\sqrt{\Vert x\Vert_1^2+\varepsilon^2}$ for $\varepsilon>0$ and $\loss(x)=\frac{1}{2}\Vert x\Vert_2^2$.
Then, the sampled optimal control problem reads as
\begin{equation}
\inf_{T\in[0,\Tmax],\,\theta\in\Theta}
\left\{J(T,\theta)\coloneqq\frac{1}{N}\sum_{i=1}^N\loss(x^i(1)-y^i)\right\}
\label{eq:objectiveFunction}
\end{equation}
subject to the state equation for each sample
\begin{equation}
\dot{x}^i(t)=Tf(x^i(t),\theta,z^i),\qquad x^i(0)=x_\init^i\label{eq:gradientFlowSample}
\end{equation}
for $i=1,\ldots,N$ and $t\in(0,1)$.
The next theorem ensures the existence of solutions to this optimal control problem.
\begin{theorem}[Existence of solutions]
The minimum in~\eqref{eq:objectiveFunction} subject to the side conditions~\eqref{eq:gradientFlowSample} is attained.
\end{theorem}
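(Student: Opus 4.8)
The plan is to apply the direct method of the calculus of variations. First I would note that, since $\loss\ge 0$, the functional $J$ is bounded below, so $J^\ast\coloneqq\inf_{T\in[0,\Tmax],\,\theta\in\Theta}J(T,\theta)$ is a finite nonnegative number and there is a minimizing sequence $(T_k,\theta_k)_{k\in\N}\subset[0,\Tmax]\times\Theta$ with $J(T_k,\theta_k)\to J^\ast$. Because $[0,\Tmax]$ is compact and $\Theta\subset\R^p$ is compact, after passing to a (non-relabeled) subsequence we may assume $T_k\to T^\ast\in[0,\Tmax]$ and $\theta_k\to\theta^\ast\in\Theta$. The whole argument then reduces to showing that the objective is ``continuous along the state equation'', i.e. that the trajectories $x^i$ depend continuously on the controls $(T,\theta)$, so that one can pass to the limit in $J$.

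The next and most delicate step is to establish that, for every $(T,\theta)\in[0,\Tmax]\times\Theta$ and every $i$, the state equation~\eqref{eq:gradientFlowSample} admits a unique global solution $x^i\in C^1([0,1];\R^{nC})$, together with an a priori bound that is uniform in $\theta\in\Theta$. The vector field $x\mapsto f(x,\theta,z)$ is continuously differentiable, since $\mathcal{D}$ is quadratic and $\mathcal{N}$ is a composition of affine maps with the smooth activation~$\phi$; hence it is locally Lipschitz in $x$. Moreover, each micro-block $\mathbf{Mi}_j^i(x)=x+K_{j,2}^i\phi(K_{j,1}^i x)$ has Jacobian $\Id+K_{j,2}^i\,\mathrm{diag}(\phi'(K_{j,1}^i x))\,K_{j,1}^i$, which is bounded uniformly in $x$ because $\phi'$ is bounded; as all the remaining building blocks ($K$, $w$, the strided (transposed) convolutions used for down-/upsampling, and the $1\times1$ concatenation maps) are linear, the Jacobian of $x\mapsto\regularizerVector(x,\theta)$, and therefore $\nabla_1\mathcal{R}(x,\theta)$ itself, is bounded uniformly in $x$ and, by continuity in $\theta$ on the compact set $\Theta$, uniformly in $\theta$. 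Consequently $\Vert f(x,\theta,z)\Vert_2\le\Vert A\Vert^2\Vert x\Vert_2+\Vert A\Vert\,\Vert z\Vert_2+c$ with $c$ independent of $\theta\in\Theta$, i.e. $f$ grows at most linearly in $x$. A Grönwall estimate then excludes finite-time blow-up and yields a unique solution on all of $[0,1]$ with $\sup_{t\in[0,1]}\Vert x^i(t)\Vert_2\le C$, where $C$ depends only on $x_\init^i$, $z^i$, $\Tmax$, $\Vert A\Vert$ and $c$, hence not on $\theta\in\Theta$.

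With these uniform bounds in hand I would prove continuous dependence of $x^i(1)$ on $(T,\theta)$. Comparing the trajectories $x_k^i$ (driven by $(T_k,\theta_k)$) and $x^i$ (driven by $(T^\ast,\theta^\ast)$) on the bounded region furnished by the a priori estimate, and using that $f$ is Lipschitz in $x$ there, affine in $T$, and continuous in $\theta$, a further Grönwall argument gives $\sup_{t\in[0,1]}\Vert x_k^i(t)-x^i(t)\Vert_2\to 0$ as $k\to\infty$; in particular $x_k^i(1)\to x^i(1)$ for each $i=1,\dots,N$. Since $\loss$ is continuous, $J(T_k,\theta_k)=\frac1N\sum_{i=1}^N\loss(x_k^i(1)-y^i)\to\frac1N\sum_{i=1}^N\loss(x^i(1)-y^i)=J(T^\ast,\theta^\ast)$, whence $J(T^\ast,\theta^\ast)=J^\ast$ and the minimum is attained at $(T^\ast,\theta^\ast)$.

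I expect the main obstacle to be precisely the global-in-time solvability of the state equation: one must rule out finite-time blow-up of the trajectories, uniformly with respect to the controls, which is exactly where the architectural design enters — the residual structure of the micro-blocks combined with the boundedness of $\phi'$ forces $\nabla_1\mathcal{R}$ to be bounded, hence $f$ to be of linear growth. Once this (and the accompanying $\theta$-uniform a priori bound) is secured, the continuous-dependence and limit-passage steps are standard ODE arguments, provided one is careful that all Grönwall constants are uniform for $\theta$ in the compact set $\Theta$.
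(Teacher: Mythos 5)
Your proof is correct, and its skeleton --- minimizing sequence, compactness of $[0,\Tmax]\times\Theta$, a linear-growth/Gr\"onwall a priori bound on the trajectories, limit passage in the state, continuity of $\loss$ --- coincides with the paper's. Where you genuinely diverge is the limit passage: the paper bounds $\Vert x_j(t)\Vert_2$ and $\Vert\dot x_j(t)\Vert_2$ uniformly, deduces a uniform bound in $H^1((0,1),\R^{nC})$, extracts a weakly convergent subsequence, and uses the compact Sobolev embedding into $C^0([0,1],\R^{nC})$ to obtain uniform convergence of the states, after which it identifies the limit with the unique solution for the limiting parameters; you instead prove a quantitative continuous-dependence estimate, comparing the trajectory driven by $(T_k,\theta_k)$ directly with the one driven by $(T^\ast,\theta^\ast)$ through a second Gr\"onwall argument. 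Your route is more elementary (no Sobolev spaces or weak compactness are needed) and it yields the identification of the limit trajectory for free, whereas the paper's compactness argument avoids any stability estimate but must argue separately that the uniform limit solves the ODE. A further difference: the paper simply records the growth estimate $\Vert\nabla_1\mathcal{R}(x,\theta)\Vert_2\le C_\mathcal{R}(\theta,\nu)\Vert x\Vert_2$ as a structural consequence of the network, while you derive the (even stronger) uniform boundedness of $\nabla_1\mathcal{R}$ from the residual micro-blocks and the boundedness of $\phi'$; either statement gives the linear growth of $f$ needed to exclude finite-time blow-up, and your insistence that all constants be uniform over the compact set $\Theta$ is exactly the right care to take.
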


\section{Discretized optimal control problem}
In this section, we present a novel fully discrete formulation of the previously introduced sampled optimal control problem.
The state equation~\eqref{eq:gradientFlowSample} is discretized using a semi-implicit scheme resulting in
\begin{equation}
x_{s+1}^i=x_s^i-\tfrac{T}{S}A^\top(Ax_{s+1}^i-z^i)-\tfrac{T}{S}\nabla_1\mathcal{R}(x_s^i,\theta)\in\R^{nC}
\label{eq:stateEquationDiscrete}
\end{equation}
for $s=0,\ldots,S-1$ and $i=1,\ldots,N$, where the \emph{depth}~$S\in\N$ is a priori fixed.
This equation is equivalent to $x_{s+1}^i=\widetilde{f}(x_s^i,T,\theta,z^i)$ with
\begin{equation}
\widetilde{f}(x,T,\theta,z)\coloneqq(\Id+\tfrac{T}{S}A^\top A)^{-1}(x+\tfrac{T}{S}(A^\top z-\nabla_1\mathcal{R}(x,\theta))).
\label{eq:discreteRightHandSide}
\end{equation}
The initial state satisfies $x_0^i=x_\init^i\in\R^{nC}$.
Then, the discretized sampled optimal control problem reads as
\begin{equation}
\inf_{T\in[0,\Tmax],\,\theta\in\Theta}
\left\{J_S(T,\theta)\coloneqq\frac{1}{N}\sum_{i=1}^N\loss(x_S^i-y^i)\right\}
\label{eq:discreteOptimalControl}
\end{equation}
subject to $x_{s+1}^i=\widetilde{f}(x_s^i,T,\theta,z^i)$.
Following the discrete Pontryagin maximum principle~\cite{Ha66,LiHa18}, the associated discrete adjoint state~$p_s^i$ is given by
\begin{equation}
p_s^i=(\Id-\tfrac{T}{S}\nabla_1^2\mathcal{R}(x_s^i,\theta))(\Id+\tfrac{T}{S}A^\top A)^{-1}p_{s+1}^i
\label{eq:adjointEquationDiscrete}
\end{equation}
for $s=S-1,\ldots,0$ and $i=1,\ldots,N$, and the terminal condition $p_S^i=-\frac{1}{N}\nabla\loss(x_S^i-y^i)$.
For further details, we refer the reader to the appendix.

The next theorem states an exactly computable condition for the optimal stopping time, which is of vital importance for the numerical optimization:
\begin{theorem}[Optimality condition]
Let $(\overline{T},\overline{\theta})$ be a stationary point of~$J_S$ with associated states~$\overline{x}_s^i$ and adjoint states $\overline{p}_s^i$ satisfying~\eqref{eq:stateEquationDiscrete} and \eqref{eq:adjointEquationDiscrete}
subject to the initial conditions $\overline{x}_0^i=x_\init^i$ and the terminal conditions $\overline{p}_S^i=-\frac{1}{N}\nabla\loss(\overline{x}_S^i-y^i)$.
We further assume that $\nabla \widetilde{f}(\overline{x}_s^i,\overline{T},\overline{\theta},z^i)$ has full rank for all $i=1,\ldots,N$ and $s=0,\ldots,S$.
Then,
\begin{equation}
-\frac{1}{N}\sum_{s=0}^{S-1}\sum_{i=1}^N\langle\overline{p}_{s+1}^i,(\Id+\tfrac{\overline{T}}{S}A^\top A)^{-1}(\overline{x}_{s+1}^i-\overline{x}_s^i)\rangle=0.
\label{eq:optimalTDiscrete}
\end{equation}
\end{theorem}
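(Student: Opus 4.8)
The plan is to derive the optimality condition as the first-order stationarity condition of $J_S$ with respect to the control variable $T$, using the discrete adjoint calculus. First I would introduce the discrete Lagrangian associated with the constrained minimization~\eqref{eq:discreteOptimalControl}: adjoin the state equations $x_{s+1}^i = \widetilde{f}(x_s^i,T,\theta,z^i)$ to the objective via the multipliers $p_{s+1}^i$, writing
\begin{equation*}
\mathcal{L}(T,\theta,(x_s^i),(p_s^i)) = \frac{1}{N}\sum_{i=1}^N \loss(x_S^i - y^i) - \sum_{i=1}^N\sum_{s=0}^{S-1}\langle p_{s+1}^i, x_{s+1}^i - \widetilde{f}(x_s^i,T,\theta,z^i)\rangle.
\end{equation*}
The stationarity of $\mathcal{L}$ with respect to each $x_s^i$ reproduces the adjoint equation~\eqref{eq:adjointEquationDiscrete} together with the terminal condition $p_S^i = -\tfrac1N\nabla\loss(x_S^i - y^i)$; this is exactly the discrete Pontryagin setup already recalled in the excerpt, so I would only state it and move on.

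The key step is to compute $\partial_T \mathcal{L}$ along the constrained trajectory. Since at a stationary point the constraints hold and the $x$-derivatives vanish, the total derivative of $J_S$ with respect to $T$ equals the partial derivative of the Lagrangian, namely
\begin{equation*}
\frac{\partial J_S}{\partial T}(\overline{T},\overline{\theta}) = \sum_{i=1}^N\sum_{s=0}^{S-1}\langle \overline{p}_{s+1}^i, \partial_T\widetilde{f}(\overline{x}_s^i,\overline{T},\overline{\theta},z^i)\rangle.
\end{equation*}
Then I would differentiate the closed form~\eqref{eq:discreteRightHandSide}. Writing $M = M(T) \coloneqq (\Id + \tfrac{T}{S}A^\top A)$ and using $\partial_T M^{-1} = -M^{-1}(\tfrac1S A^\top A)M^{-1}$, a short computation gives
\begin{equation*}
\partial_T\widetilde{f}(x,T,\theta,z) = M^{-1}\Big(\tfrac1S(A^\top z - \nabla_1\mathcal{R}(x,\theta))\Big) - M^{-1}\big(\tfrac1S A^\top A\big) M^{-1}\big(x + \tfrac{T}{S}(A^\top z - \nabla_1\mathcal{R}(x,\theta))\big).
\end{equation*}
Recognizing that $\widetilde{f}(x,T,\theta,z) = M^{-1}(x + \tfrac{T}{S}(A^\top z - \nabla_1\mathcal{R}))$ and that $x_{s+1}^i = \widetilde{f}(x_s^i,\cdots)$, the second term is $-M^{-1}(\tfrac1S A^\top A)\,x_{s+1}^i$; combining and simplifying, one should arrive at $\partial_T\widetilde{f}(x_s^i,\cdots) = \tfrac1S M^{-1}(x_{s+1}^i - x_s^i)\cdot(\text{something})$ — more precisely, using the defining relation $M x_{s+1}^i = x_s^i + \tfrac{T}{S}(A^\top z^i - \nabla_1\mathcal{R}(x_s^i,\theta))$ to eliminate $A^\top z^i - \nabla_1\mathcal{R}$, one gets $\partial_T\widetilde{f} = -\tfrac{1}{T}M^{-1}(x_{s+1}^i - x_s^i)$ or an equivalent algebraic rearrangement that isolates the factor $M^{-1}(x_{s+1}^i - x_s^i)$ appearing in~\eqref{eq:optimalTDiscrete}. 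Substituting into the expression for $\partial_T J_S$ and setting it to zero yields~\eqref{eq:optimalTDiscrete} up to the harmless overall constant $-\tfrac1N$ (or $-\tfrac1T$), which does not affect the vanishing.

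I would close by addressing the role of the full-rank hypothesis on $\nabla\widetilde{f}(\overline{x}_s^i,\overline{T},\overline{\theta},z^i)$: it guarantees that the constrained problem is non-degenerate so that the Lagrange multiplier rule applies and the reduced objective $T\mapsto J_S(T,\overline{\theta})$ is differentiable with derivative given by the adjoint formula, justifying the interchange of the total and partial $T$-derivatives at the stationary point. The main obstacle I anticipate is purely the algebra of step two: correctly differentiating the matrix inverse and then using the state equation itself to recognize the combination $M^{-1}(x_{s+1}^i - x_s^i)$ inside $\partial_T\widetilde{f}$, so that the sum telescopes into the stated inner-product form rather than a messier expression involving $A^\top z^i$ and $\nabla_1\mathcal{R}$ separately. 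Everything else — the Lagrangian bookkeeping and the envelope argument — is standard discrete optimal-control machinery.
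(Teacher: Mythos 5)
Your proposal follows essentially the same route as the paper's proof: adjoin the state equation via a discrete Lagrangian, read off the adjoint system from stationarity in $x$, differentiate $\widetilde{f}$ in $T$ using $\partial_T B(T)^{-1}=-B(T)^{-1}(\tfrac{1}{S}A^\top A)B(T)^{-1}$, and use the state equation itself to collapse the result to a multiple of $B(\overline{T})^{-1}(\overline{x}_{s+1}^i-\overline{x}_s^i)$, with the full-rank condition invoked exactly as in the paper to guarantee the existence of the multipliers. The only blemish is the sign in your hedged algebra step: the correct identity is $\partial_T\widetilde{f}(x_s^i,T,\theta,z^i)=+\tfrac{1}{T}B(T)^{-1}(x_{s+1}^i-x_s^i)$ rather than $-\tfrac{1}{T}B(T)^{-1}(x_{s+1}^i-x_s^i)$, which is immaterial here since only the vanishing of the derivative is asserted.
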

Note that~\eqref{eq:optimalTDiscrete} is the derivative of the discrete Lagrange functional minimizing~$J_S$ subject to the discrete state equation.
The proof and a time continuous version of this theorem are presented in the appendix.

An important property of any learning based method is the dependency of the learned parameters with respect to different training datasets~\cite{GeBi92}.
\begin{theorem}\label{thm:sensitivity}
Let $(T,\theta),(\widetilde{T},\widetilde{\theta})$ be two pairs of control parameters obtained from two different training datasets.
We denote by~$x,\widetilde x\in(\R^{nC})^{(S+1)}$ two solutions of the state equation with the same observed data~$z$ and initial condition~$x_\init$, i.e.
\begin{equation}
x_{s+1}=\widetilde{f}(x_s,T,\theta,z),\quad
\widetilde{x}_{s+1}=\widetilde{f}(\widetilde{x}_s,\widetilde{T},\widetilde{\theta},z)
\end{equation}
for $s=1,\ldots,S-1$ and $x_0=\widetilde{x}_0=x_\init$.
Let $B(T)\coloneqq\Id+\frac{T}{S}A^\top A$ and ~$L_\mathcal{R}$ be the Lipschitz constant of~$\mathcal{R}$, i.e.
\begin{equation}
\Vert\nabla_1\mathcal{R}(x,\theta)-\nabla_1\mathcal{R}(\widetilde{x},\widetilde{\theta})\Vert_2\leq L_\mathcal{R}
\left\Vert
\begin{pmatrix}
x\\
\theta
\end{pmatrix}
-
\begin{pmatrix}
\widetilde{x}\\
\widetilde{\theta}
\end{pmatrix}
\right\Vert_2
\label{eq:Lipschitz}
\end{equation}
for all~$x,\widetilde{x}\in\R^{nC}$ and all~$\theta,\widetilde{\theta}\in\Theta$.
Then,
\begin{align}
\Vert x_{s+1}-\widetilde{x}_{s+1}\Vert_2
\leq&\Vert B(T)^{-1}-B(\widetilde{T})^{-1}\Vert_2
\Big(\Vert x_s\Vert_2+\tfrac{T}{S}\Vert A^\top z\Vert_2
+\tfrac{T}{S}\Vert\nabla_1\mathcal{R}(x_s,\theta)\Vert_2\Big)\notag\\
&+\Vert B(\widetilde{T})^{-1}\Vert_2
\Big(\Vert x_s-\widetilde{x}_s\Vert_2
+\tfrac{\vert T-\widetilde{T}\vert}{S}\Vert A^\top z\Vert_2+\tfrac{\vert T-\widetilde{T}\vert}{S}\Vert\nabla_1\mathcal{R}(\widetilde{x}_s,\widetilde{\theta})\Vert_2\notag\\
&+\tfrac{T}{S}L_\mathcal{R}\Vert(x,\theta)^\top-(\widetilde{x},\widetilde{\theta})^\top\Vert_2\Big).
\label{eq:sensitivity}
\end{align}
\end{theorem}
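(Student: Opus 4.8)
The plan is to estimate $\Vert x_{s+1}-\widetilde{x}_{s+1}\Vert_2$ directly from the closed form of~$\widetilde{f}$ in~\eqref{eq:discreteRightHandSide} by a short chain of add-and-subtract manipulations. First I would set $B(T)=\Id+\tfrac{T}{S}A^\top A$ as in the statement and introduce the auxiliary map $g(x,T,\theta)\coloneqq x+\tfrac{T}{S}(A^\top z-\nabla_1\mathcal{R}(x,\theta))$, so that $x_{s+1}=B(T)^{-1}g(x_s,T,\theta)$ and $\widetilde{x}_{s+1}=B(\widetilde{T})^{-1}g(\widetilde{x}_s,\widetilde{T},\widetilde{\theta})$. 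Inserting the intermediate term $B(\widetilde{T})^{-1}g(x_s,T,\theta)$ gives $x_{s+1}-\widetilde{x}_{s+1}=(B(T)^{-1}-B(\widetilde{T})^{-1})g(x_s,T,\theta)+B(\widetilde{T})^{-1}(g(x_s,T,\theta)-g(\widetilde{x}_s,\widetilde{T},\widetilde{\theta}))$; the triangle inequality together with submultiplicativity of the spectral norm then splits the estimate into the two top-level contributions of~\eqref{eq:sensitivity}.

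For the first contribution the triangle inequality applied to $g(x_s,T,\theta)=x_s+\tfrac{T}{S}A^\top z-\tfrac{T}{S}\nabla_1\mathcal{R}(x_s,\theta)$ yields at once $\Vert g(x_s,T,\theta)\Vert_2\leq\Vert x_s\Vert_2+\tfrac{T}{S}\Vert A^\top z\Vert_2+\tfrac{T}{S}\Vert\nabla_1\mathcal{R}(x_s,\theta)\Vert_2$, which is precisely the bracket multiplying $\Vert B(T)^{-1}-B(\widetilde{T})^{-1}\Vert_2$. For the second contribution I would expand $g(x_s,T,\theta)-g(\widetilde{x}_s,\widetilde{T},\widetilde{\theta})$ and, after adding and subtracting $\tfrac{T}{S}\nabla_1\mathcal{R}(\widetilde{x}_s,\widetilde{\theta})$, regroup it as $(x_s-\widetilde{x}_s)+\tfrac{T-\widetilde{T}}{S}A^\top z-\tfrac{T}{S}(\nabla_1\mathcal{R}(x_s,\theta)-\nabla_1\mathcal{R}(\widetilde{x}_s,\widetilde{\theta}))-\tfrac{T-\widetilde{T}}{S}\nabla_1\mathcal{R}(\widetilde{x}_s,\widetilde{\theta})$. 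One more triangle inequality, followed by the Lipschitz bound~\eqref{eq:Lipschitz} applied to $\Vert\nabla_1\mathcal{R}(x_s,\theta)-\nabla_1\mathcal{R}(\widetilde{x}_s,\widetilde{\theta})\Vert_2\leq L_\mathcal{R}\Vert(x_s,\theta)^\top-(\widetilde{x}_s,\widetilde{\theta})^\top\Vert_2$, reproduces the remaining four terms in~\eqref{eq:sensitivity}.

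Every step is just the triangle inequality, submultiplicativity of $\Vert\cdot\Vert_2$, or a direct appeal to~\eqref{eq:Lipschitz}, so there is no genuine analytic obstacle; the only thing that needs care is choosing which intermediate quantities to add and subtract so that the final bound matches the asserted form verbatim — in particular placing the full factor $\tfrac{T}{S}$ on the Lipschitz term and the factor $\tfrac{\vert T-\widetilde{T}\vert}{S}$ on the $\nabla_1\mathcal{R}(\widetilde{x}_s,\widetilde{\theta})$ term rather than one of the other admissible splittings. No induction over~$s$ and no use of the compactness of~$\Theta$ is required, since~\eqref{eq:sensitivity} is a one-step estimate that still carries $\Vert x_s-\widetilde{x}_s\Vert_2$ on its right-hand side.
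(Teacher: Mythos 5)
Your proposal is correct and follows essentially the same route as the paper's proof: the appendix argument also introduces $g(x,T,\theta)=x+\tfrac{T}{S}A^\top z-\tfrac{T}{S}\nabla_1\mathcal{R}(x,\theta)$, writes $x_{s+1}=B(T)^{-1}g(x_s,T,\theta)$, and obtains~\eqref{eq:sensitivity} by the same add-and-subtract splitting, triangle inequality, and the Lipschitz bound~\eqref{eq:Lipschitz}. Your observation that the Lipschitz term should carry the argument $x_s$ (rather than the full trajectory $x$) also matches the form appearing in the paper's own proof.
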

Hence, this theorem provides a computable upper bound for the norm difference of two states with observed value~$z$ and initial value~$x_\init$ evaluated at the same step~$s$,
which amounts to a sensitivity analysis w.r.t.~the training data.

\section{Numerical results}
In this section, we elaborate on the training and optimization, and we analyze the numerical results for four exemplary applications of TDVs: image denoising, CT and MRI reconstruction, and single image super-reconstruction.

\subsection{Training and optimization}
For all models considered, we solely use 400~images taken from the BSDS400 dataset~\cite{MaFo01} for training, where
we apply a data augmentation by flipping and rotating the images by multiples of $90^\circ$.
We compute the approximate minimizers of the discretized sampled optimal control problem~\eqref{eq:discreteOptimalControl} using the stochastic ADAM optimizer~\cite{KiBa15} on batches of size~$32$ with a patch size~$96\times 96$.
The zero-mean constraint of the kernel~$K$ is enforced by a projection after each iteration step.
For image denoising, we incorporate the squared $\ell^2$-loss function $\loss(x)=\frac{1}{2}\Vert x\Vert_2^2$ and the learning rate $4\cdot 10^{-4}$, whereas
for single image super-resolution the regularized $\ell^1$-loss $\loss(x)=\sqrt{\Vert x\Vert_1^2+\varepsilon^2}$ with $\varepsilon=10^{-3}$ and the learning rate $10^{-3}$ is used.
The first and second order momentum variables of the ADAM optimizer are set to $0.9$ and $0.999$ and $10^6$ training steps are performed.
Throughout all experiments we set $\nu=9$, the number of feature channels~$m=32$, and $S=10$ if not otherwise stated.

\subsection{Image denoising}
In the first task, we analyze the performance of the TDV regularizer for additive white Gaussian denoising.
To this end, we create the training dataset by uniformly drawing a ground truth patch~$y^i$ from the BSDS400 dataset and add Gaussian noise~$n^i\sim\mathcal{N}(0,\sigma^2)$.
Consequently, we set $x_\init^i=z^i=y^i+n^i$ and the linear operator~$A$ coincides with the identity matrix~$\Id$.

Table~\ref{tab:awgn} lists the average PSNR values of classical image test datasets for varying noise levels~$\sigma\in\{15,25,50\}$.
In the penultimate column, the PSNR values of our proposed TDV regularizer with three macro-blocks solely trained for~$\sigma=25$ (denoted by TDV$^3_{25}$) are presented.
To apply the TDV$^3_{25}$ model to different noise levels, we first rescale the noisy images~$\widehat{x}_\init^i=\widehat{z}^i=\tfrac{25}{\sigma}z^i$, then apply the learned scheme~\eqref{eq:stateEquationDiscrete}, and obtain the results via~$x_S^i=\tfrac{\sigma}{25}\widehat{x}_S^i$.
In the last column, the PSNR values of the proposed TDV regularizer with three macro-blocks (denoted by TDV$^3$) \emph{individually} trained for each specific noise levels are shown.
For all considered noise levels and datasets, state-of-the-art FOCNet~\cite{JiLi19} achieves the best results in terms of PSNR score.
The proposed TDV$^3_{25}$ and TDV$^3$ models have comparable PSNR values.
The adaption of the TDV$^3$ model to specific noise levels further increases the PSNR value.
To conclude, we emphasize that we achieve results on par with FOCNet with less than 1\% of the trainable parameters,
which highlights the potential of our approach.
Compared to FOCNet, the \emph{inherent variational structure} of our model allows for a \emph{deeper mathematical analysis}, that we elaborate on below.
\begin{table*}
\centering
\resizebox{\linewidth}{!}{
\begin{tabular}{l c*{7}{c} c}
\toprule[1.5pt]
Data set & $\sigma$ & BM3D~\cite{DaFo07} & TNRD~\cite{ChPo17} & DnCNN~\cite{ZhZu17} & FFDNet~\cite{ZhZu18} & N$^3$Net~\cite{PlRo18} & FOCNet~\cite{JiLi19} & TDV$^3_{25}$ & TDV$^3$ \\ \midrule[1pt]
\multirow{3}{*}{Set12} 
& 15 & 32.37 & 32.50 & 32.86 & 32.75 &  -    & 33.07 & 32.93 & 33.01\\
& 25 & 29.97 & 30.05 & 30.44 & 30.43 & 30.55 & 30.73 & 30.66 & 30.66\\
& 50 & 26.72 & 26.82 & 27.18 & 27.32 & 27.43 & 27.68 & 27.50 & 27.59\\ \midrule
\multirow{3}{*}{BSDS68} 
& 15 & 31.08 & 31.42 & 31.73 & 31.63 &  -    & 31.83 & 31.76 & 31.82\\
& 25 & 28.57 & 28.92 & 29.23 & 29.19 & 29.30 & 29.38 & 29.37 & 29.37\\
& 50 & 25.60 & 25.97 & 26.23 & 26.29 & 26.39 & 26.50 & 26.40 & 26.45\\ \midrule
\multirow{3}{*}{Urban100}
& 15 & 32.34 & 31.98 & 32.67 & 32.43 &  -    & 33.15 & 32.66 & 32.87 \\
& 25 & 29.70 & 29.29 & 29.97 & 29.92 & 30.19 & 30.64 & 30.38 & 30.38 \\
& 50 & 25.94 & 25.71 & 26.28 & 26.52 & 26.82 & 27.40 & 26.94 & 27.04 \\ \midrule[1pt]
\# Parameters
&    &       & 26,645 & 555,200 & 484,800 & 705,895 & 53,513,120 & 427,330 & 427,330\\
\bottomrule[1.5pt]
\end{tabular}
}
\caption{Comparison of average PSNR values for additive white Gaussian noise for~$\sigma\in\{15,25,50\}$ on classical image datasets.
In the last row, the number of trainable parameters is listed.}
\label{tab:awgn}
\end{table*}

Figure~\ref{fig:energyLandscape} depicts surface plots of the deep variation $[-1,1]\ni(\xi_1,\xi_2)\mapsto\regularizerVector(\xi_1 x+\xi_2 n)_i$
of TDV$_{25}^3$ evaluated at four prototypic patches~$x$ of size $49\times 49$, where $i$ is the index of the center pixel marked by the red points.
Here, $n$ refers to a Gaussian noise with standard deviation~$\sigma=25$.
Hence, the surface plots visualize the local regularization energy in the image contrast direction and a random noise direction.
The deep variation is smooth with commonly only a single local minimizer
and the shape significantly varies depending on the pixel neighborhood.
\begin{figure}
\centering
\includegraphics[width=.5\linewidth]{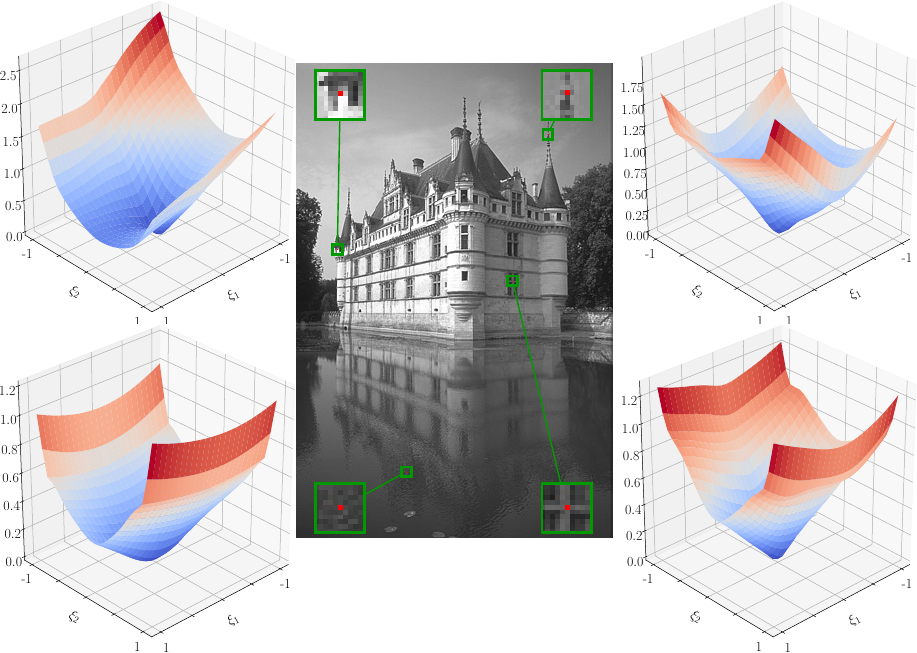}
\caption{Surface plots of the deep variation $[-1,1]\ni(\xi_1,\xi_2)\mapsto\regularizerVector(\xi_1 x+\xi_2 n)_i$
of four patches--each evaluated at the red center pixel using TDV$^3$ with BSDS68 and~$\sigma=25$.}
\label{fig:energyLandscape}
\end{figure}

Figure~\ref{fig:PSNRDenoising} (top) visualizes the average PSNR values $S\mapsto\frac{1}{N}\sum_{i=1}^N\mathrm{PSNR}(x_S^i,y^i)$ on the BSDS68 test dataset for $\sigma=25$.
The second plot $S\mapsto\overline{T}$ (bottom) shows the learned optimal stopping time as a function of the depth.
In all experiments, TDV regularizers with more macro-blocks perform significantly better.
Moreover, the average PSNR value saturates beyond the depth $S=10$.
The optimal stopping time converges for large~$S$ in all models.
Consequently, larger depth values~$S$ lead to a finer time discretization of the trajectories and depth values~$S$ exceeding 10~yield no further improvement.
\begin{figure}
\centering
\includegraphics[width=.5\linewidth]{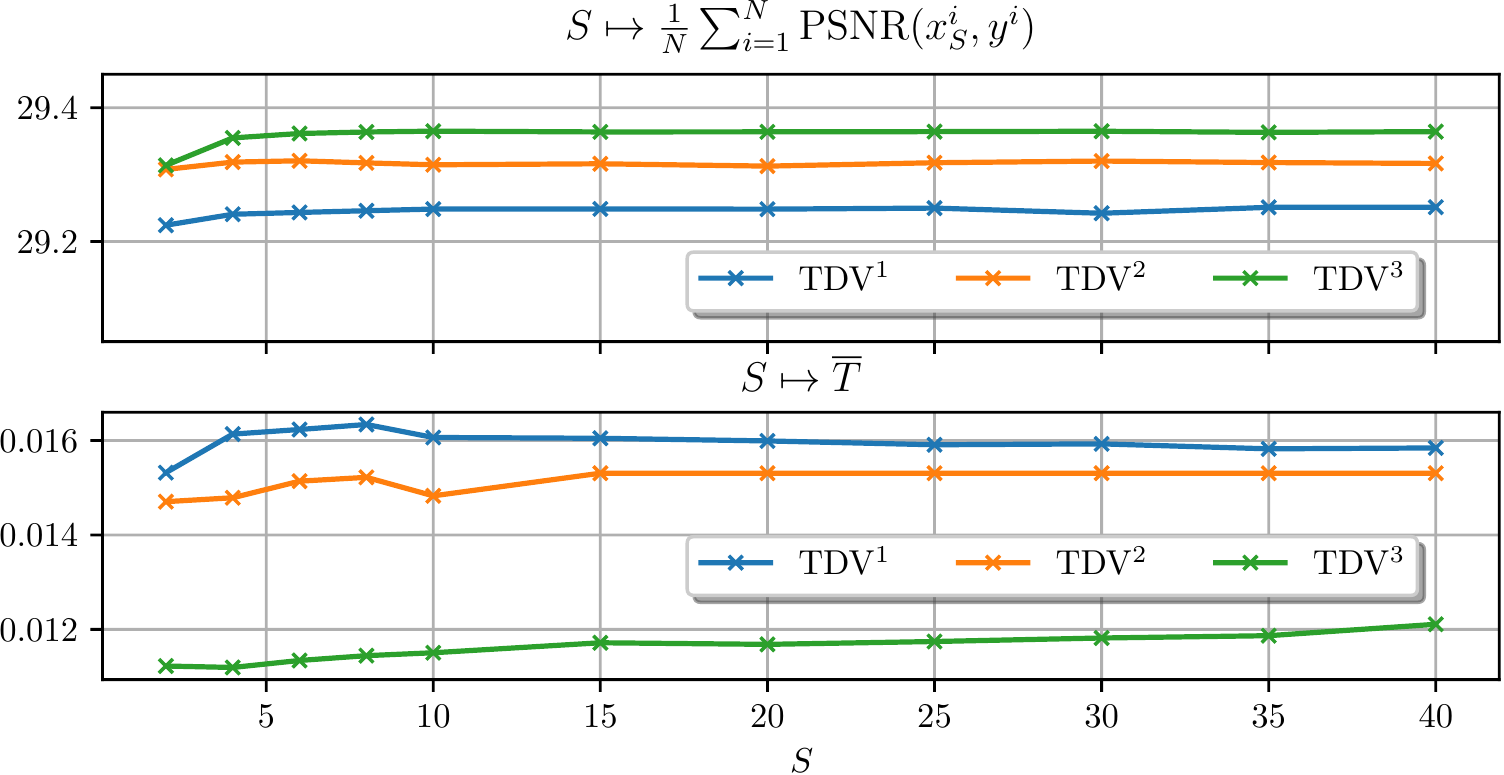}
\caption{Plots of the functions $S\mapsto\frac{1}{N}\sum_{i=1}^N\mathrm{PSNR}(x_S^i,y^i)$ (top) and $S\mapsto\overline{T}$ (bottom)
for TDV$^1$ (blue), TDV$^2$ (orange) and TDV$^3$ (green).}
\label{fig:PSNRDenoising}
\end{figure}
To address the significance of the optimal stopping time~$\overline{T}$, we evaluate the PSNR values (top) and the first order condition~\eqref{eq:optimalTDiscrete} (bottom) as a function of the stopping time in Figure~\ref{fig:OptimalStoppingTime} for TDV$_{25}^3$ trained for $S=10$.
Each black dashed curve represents a single image of the BSDS68 test dataset and the red curve is the average among all test samples.
Initially, all PSNR curves monotonically increase up to a unique maximum value located near the optimal stopping time~$\overline{T}=0.0297$ and decrease for larger values of~$T$,
which is exactly determined by the first order condition~\eqref{eq:optimalTDiscrete}.
We stress that all curves peak around a very small neighborhood, which results from an overlap of all curves near the zero crossing in the second plot.
\begin{figure}
\centering
\includegraphics[width=.5\linewidth]{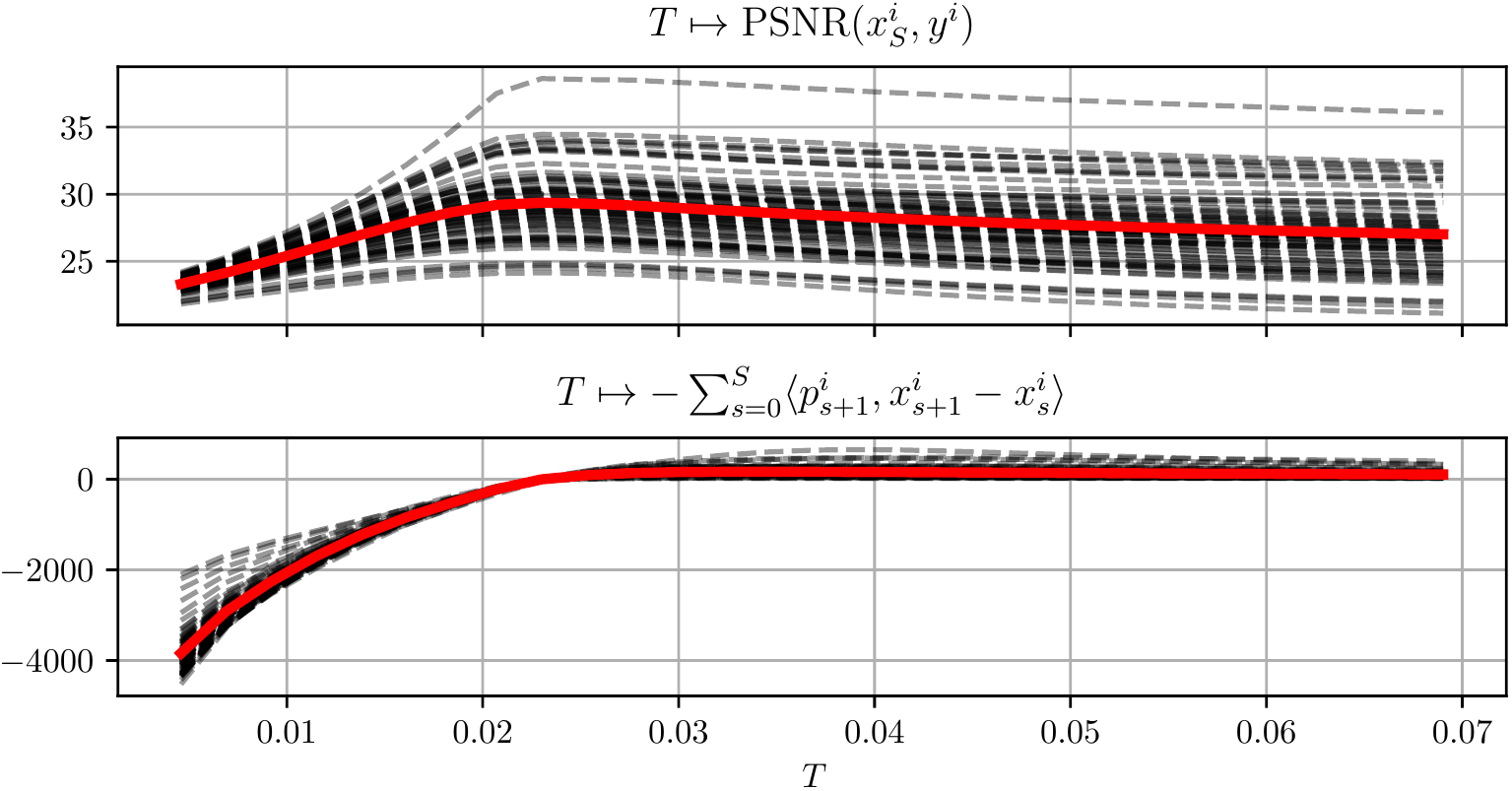}
\caption{Plots of the functions $T\mapsto\mathrm{PSNR}(x_S^i,y^i)$ (top) and the first order condition $T\mapsto-\sum_{s=0}^S \langle p_{s+1}^i,x_{s+1}^i-x_s^i\rangle$ (bottom) for $i=1,\ldots,68$ using TDV$^3$ for BSDS68 and~$\sigma=25$.
The averages across the samples are depicted by the red curves.}
\label{fig:OptimalStoppingTime}
\end{figure}

A qualitative and quantitative analysis of the impact of the stopping time for TDV$_{25}^3$ trained with $S=10$ on a standard test image is depicted in Figure~\ref{fig:OptimalStoppingSequence}.
Starting from the noisy input image, the restored image sequence for increasing $S\in\{5,10,15,20\}$ gradually removes noise.
Beyond the optimal stopping time, the model smoothes out fine details.
We emphasize that even high-frequency patterns like the vertical stripes on the chimney are consistently preserved.
Thus, the proposed model generates a stable and interpretable transition from noisy images to cartoon-like images
and the optimal stopping time determines the best intermediate result in terms of PSNR.
\begin{figure}
\includegraphics[width=\linewidth]{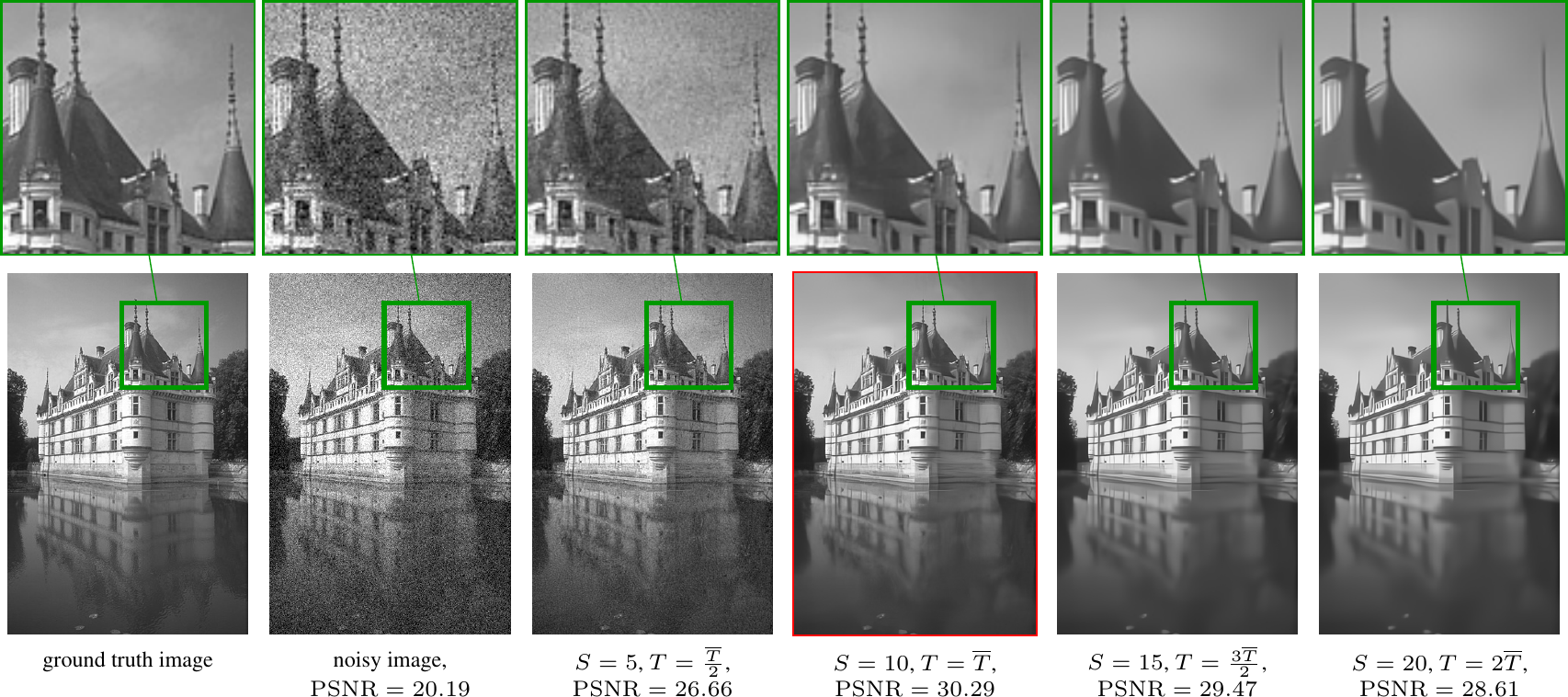}
\caption{From left to right: Ground truth, noisy input with noise level~$\sigma=25$ and resulting output of TDV$^3$ for $(S,T)\in\{(5,\frac{\overline{T}}{2}),(10,\overline{T}),(15,\frac{3\overline{T}}{2}),(20,2\overline{T})\}$,
where the optimal stopping time is $\overline{T}=0.0297$.
Note that the best image is framed in red.}
\label{fig:OptimalStoppingSequence}
\end{figure}

To analyze the local behavior of TDV$_{25}^3$, we compute a saddle point $(\overline{x},\overline{\lambda})$ of the Lagrangian
\begin{equation}
\mathcal{L}(x,\lambda)\coloneqq\mathcal{R}(x,\theta)-\frac{\lambda}{2}(\Vert x\Vert_2^2-\Vert x_\init\Vert_2^2),
\label{eq:constrainedOptimization}
\end{equation}
which is a optimization problem on the hypersphere $\Vert x\Vert_2=\Vert x_\init\Vert_2$ for a given input image~$x_\init\in\R^{nC}$ with Lagrange parameter~$\lambda$.
The optimality condition~\eqref{eq:constrainedOptimization} with respect to~$x$ is equivalent to $\nabla_1\mathcal{R}(\overline{x},\theta)=\overline{\lambda}\overline{x}$,
which shows that $(\overline{x},\overline{\lambda})$ is actually a nonlinear eigenpair of~$\nabla_1\mathcal{R}$.
Figure~\ref{fig:eigenpairs} depicts ten triplets of input images, eigenfunctions and eigenvalues~$(x_\init,\overline{x},\overline{\lambda})$, where the output images are computed using accelerated projected gradient descent.
As a result, the algorithm generates cartoon-like eigenfunctions and energy minimizing patterns are hallucinated due to the constraint.
For instance, novel stripe patterns are prolongated in the body and facial region of the woman in the third column or in the body parts of the zebra in the fifth column.
Furthermore, contours are axis aligned like the roof in the eighth column or the ship and the logo in the last image.
\begin{figure}
\includegraphics[width=\linewidth]{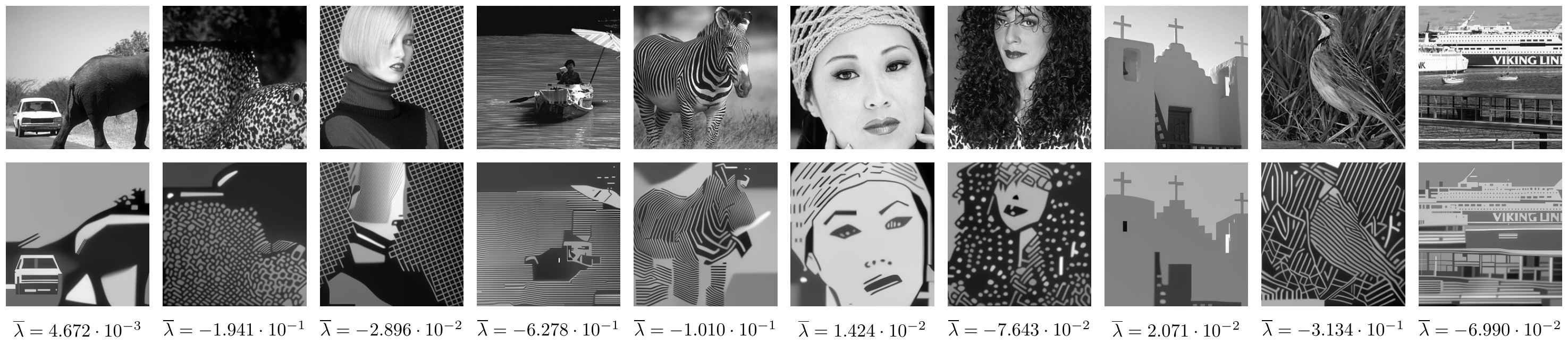}
\caption{Triplets of input images~$x_\init$ (top), eigenmodes~$\overline{x}$ (bottom) and eigenvalues~$\overline{\lambda}$ associated with the TDV$_{25}^3$ regularizer.}
\label{fig:eigenpairs}
\end{figure}

\begin{figure}
\centering
\includegraphics[width=.5\linewidth]{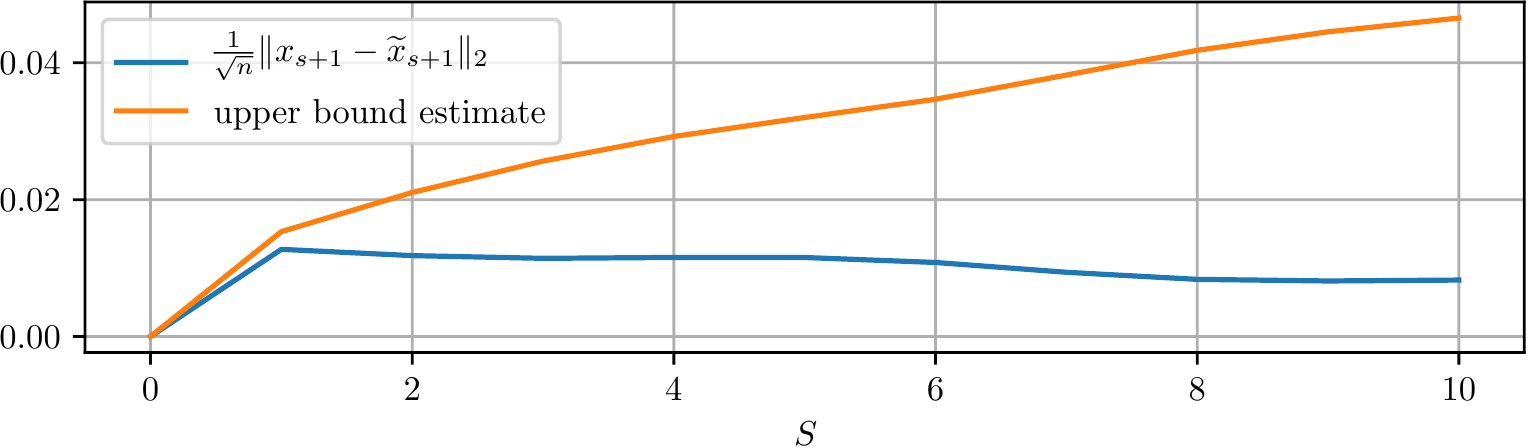}
\caption{
Comparison of RMSE along two sequences computed with two different parameter sets with the estimate~\eqref{eq:sensitivity}.
}
\label{fig:sensitivity}
\end{figure}   
In Figure~\ref{fig:sensitivity}, we compare the root mean squared error (RMSE) between the image sequences~$x_s$ and~$\widetilde{x}_s$
generated using TDV$^1$ with parameter sets~$(T,\theta)$ and $(\widetilde{T},\widetilde{\theta})$ with the upper bound estimate in Theorem~\ref{thm:sensitivity}.
Here, $(T,\theta)$ are computed by training on the entire BSDS400 training set,
whereas we solely use 10~randomly selected images of this training set to obtain~$(\widetilde{T},\widetilde{\theta})$.
The local Lipschitz constant~$L_\mathcal{R}$ in~\eqref{eq:Lipschitz} is estimated directly from the sequences.
As a result, the RMSE of the norm differences is roughly constant along~$S$ and the upper bound estimate is efficient.
Further details of the sensitivity analysis are presented in the appendix.

\subsection{Computed tomography reconstruction}
To demonstrate the broad applicability of the proposed TDV, we perform two-dimensional computed tomography (CT) reconstruction
using the TDV$^3_{25}$ regularizer trained for image denoising and $S=10$.
We stress that the regularizer is applied \emph{without} any additional training of the parameters.

The task of computed tomography is the reconstruction of an image given a set of projection measurements called sinogram, in which
the detectors of the CT scanner measure the intensity of attenuated X-ray beams.
Here, we use the linear attenuation model introduced in~\cite{HaMu18}, where the attenuation is proportional to the intersection area of a triangle, 
which is spanned by the X-ray source and a detector element, and the area of an image element.
In detail, the sinogram~$z$ of an image~$x$ is computed by~$z=A_R x$,
where $A_R$ is the lookup-table based area integral operator of~\cite{HaMu18} for $R$ angles and $768$ projections.
Typically, a fully sampled acquisition consists of $2304$~angles.
For this task, we consider the problem of angular undersampled CT~\cite{ChTa08}, where only a fraction of the angles are measured.
We use a 4-fold ($R=576$) and 8-fold ($R=288$) angular undersampling to reconstruct a representative image of the MAYO dataset~\cite{McBa17}.
To account for an imbalance of regularization and data fidelity, we manually scale the data fidelity term by~$\lambda>0$, i.e. $\mathcal{D}(x,z)\coloneqq\frac{\lambda}{2}\Vert A_Rx-z\Vert_2^2$.
The resulting smooth variational problem is optimized using accelerated gradient descent with Lipschitz backtracking.
Further details of the CT reconstruction task and the optimization scheme are included in the appendix.

We present qualitative and quantitative results for CT reconstruction in Figure~\ref{fig:CT} for a single abdominal CT image.
As an initialization, we perform $50$~steps of a conjugate gradient method on the data fidelity term (first and last column).
Using the proposed regularizer TDV$^3_{25}$, we are able to suppress the undersampling artifacts while preserving the fine vessels in the liver.
This highlights that the learned regularizer can be effectively applied as a generic regularizer for linear inverse problems without any transfer learning.

\begin{figure}
\includegraphics[width=\linewidth]{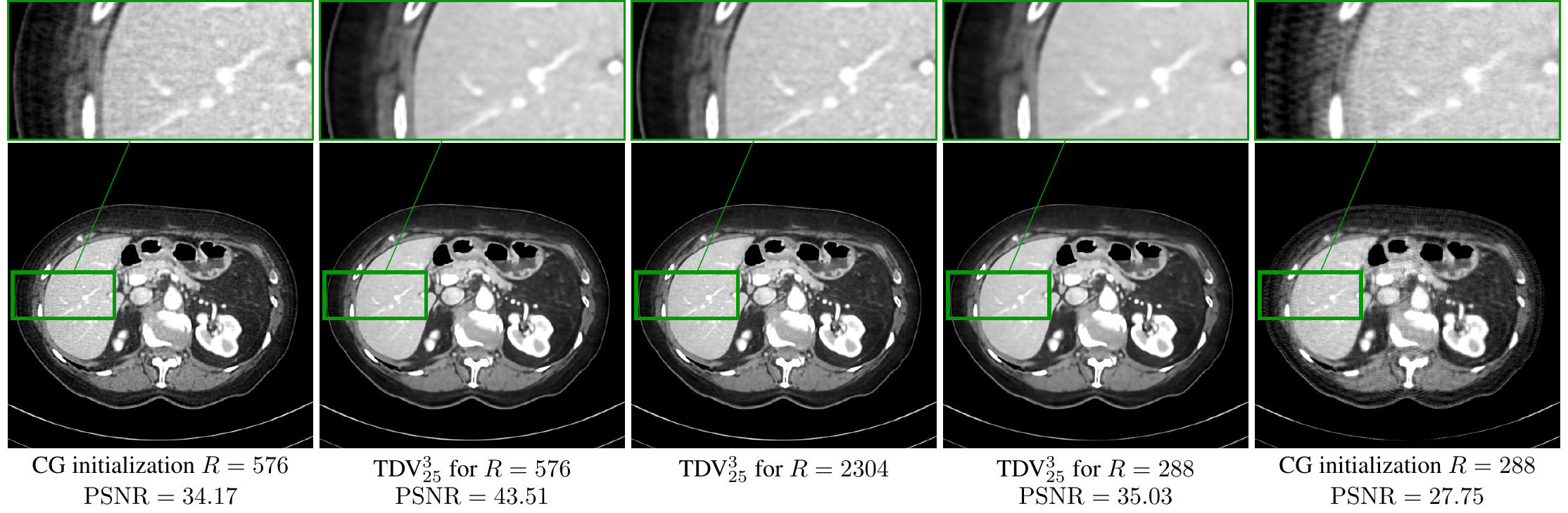}
\caption{Conjugate gradient reconstruction for 4/8-fold angular undersampled CT task (first/fifth image),
results obtained by using the TDV$^3_{25}$ regularizer with $\lambda=500\cdot10^3$ for 4-fold (second image) and $\lambda=10^6$ for 8-fold undersampling (fourth image), and fully sampled reference reconstruction using the TDV$^3_{25}$ regularizer with $\lambda=125\cdot10^3$ (third image).}
\label{fig:CT}
\end{figure}

\subsection{Magnetic resonance imaging reconstruction}
Next, the flexibility of our regularizer TDV$_{25}^3$ learned for denoising and $S=10$ is shown for accelerated magnetic resonance imaging (MRI)
\emph{without} any further adaption of~$\theta$.

In accelerated MRI, k--space data is acquired using $N_C$ parallel coils, each measuring a fraction of the full k--space to reduce acquisition time~\cite{HaKl18}.
Here, we use the data fidelity term
$\mathcal{D}(x,\{z_i\}_{i=1}^{N_C})=\frac{\lambda}{2}\sum_{i=1}^{N_C}\Vert M_RFC_ix-z_i\Vert_2^2$,
where $\lambda>0$ is a manually adjusted weighting parameter,
$M_R$ is a binary mask for $R$-fold undersampling, $F$ is the discrete Fourier transform, and $C_i$ are sensitivity maps computed following~\cite{UeLa14}.
We use 4-fold and 6-fold Cartesian undersampled MRI data to reconstruct a sample knee image.
Again, we minimize the resulting variational energy by accelerated gradient descent with Lipschitz backtracking.
Further details of this task and the optimization are in the appendix.

Figure~\ref{fig:MRI} depicts qualitative results and PSNR values for the reconstruction of 4-fold and 6-fold undersampled k--space data.
The first and last columns show the initial images obtained by applying the adjoint operator to the undersampled data.
Due to the undersampling in k--space both images are severely corrupted by backfolding artifacts,
which are removed by applying the proposed regularizer without losing fine details.

\begin{figure}
\includegraphics[width=\linewidth]{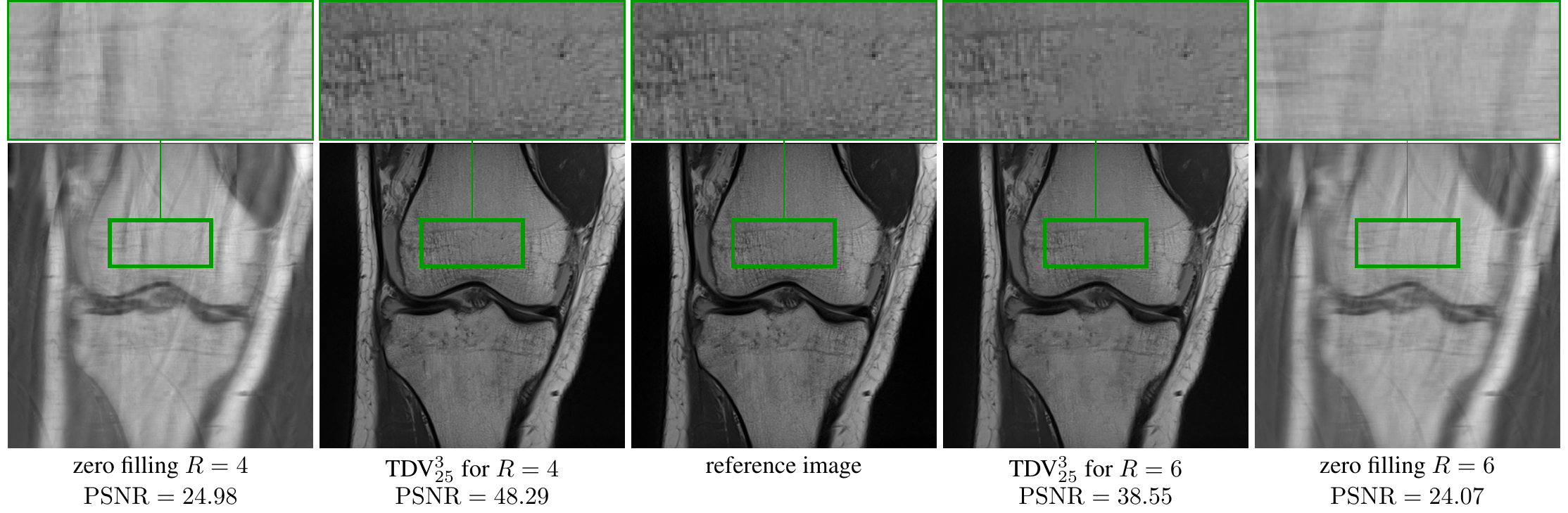}
\caption{Zero filling initialization for acceleration factors~$R\in\{4,6\}$ (first/fifth image), output using the TDV$^3_{25}$ regularizer with $\lambda=1000$ for $R=4$ (second image) and $\lambda=1500$ for $R=6$ (fourth image), and fully sampled reference (third image).}
\label{fig:MRI}
\end{figure}

\subsection{Single image super-resolution}

For single image super-resolution with scale factor $\gamma\in\{2,3,4\}$, we start with a full resolution ground truth image patch~$y^i\in\R^{nC}$ uniformly drawn from the BSDS400 dataset.
To obtain the linear downsampling operator~$A$, we implement the adjoint operator matching MATLAB\textsuperscript{\textregistered}'s bicubic upsampling operator \texttt{imresize},
which is an implementation of a scale factor-dependent interpolation convolution kernel in conjunction with a stride.
The observed low resolution image is $z^i=Ay^i\in\R^{nC/\gamma^2}$.
As an initialization, $3$~iteration steps of the conjugate gradient method for the variational problem
$x_\init^i=\argmin_{x\in\R^{nC}}\Vert Ax-z^i\Vert_2^2$ are applied.
The matrix inverse in~\eqref{eq:discreteRightHandSide} is approximated in each step by $7$~steps of a conjugate gradient scheme.
Here, all results are obtained by training a TDV$^3$ regularizer for each scale factor individually.

In Table~\ref{tab:SR}, we compare our approach with several state-of-the-art networks of similar complexity and list average PSNR values of the Y-channel in the YCbCr color space over test datasets.
For the BSDS100 dataset, our proposed method achieves similar results as OISR-LF-s~\cite{HeMo19} with only one third of the trainable parameters.
Figure~\ref{fig:OptimalStoppingSequenceSR} depicts a restored sequence of images for the single image super-resolution task with scale factor~$4$ using TDV$^3$ for a representative sample image of the Set14 dataset.
Starting from the low resolution initial image, interfaces are gradually sharpened and the best quality is achieved for~$T=\overline{T}$.
Beyond this point, interfaces are artificially intensified.
\begin{table*}
\centering
\resizebox{.8\linewidth}{!}{
\begin{tabular}{l c c*{3}{c} c}
\toprule[1.5pt]
Data set & Scale & MemNet~\cite{TaYa17} & VDSR~\cite{KiLe16} & DRRN~\cite{TaYa17a} & OISR-LF-s~\cite{HeMo19} & TDV$^3$ \\ \midrule[1pt]
\multirow{3}{*}{Set14} 
& $\times2$& 33.28 & 33.03 & 33.23 & 33.62 & 33.35\\
& $\times3$& 30.00 & 29.77 & 29.96 & 30.35 & 29.96\\
& $\times4$& 28.26 & 28.01 & 28.21 & 28.63 & 28.41\\ \midrule
\multirow{3}{*}{BSDS100} 
& $\times2$ & 32.08 & 31.90 & 32.05 & 32.20 & 32.18\\
& $\times3$ & 28.96 & 28.82 & 28.95 & 29.11 & 28.98\\
& $\times4$ & 27.40 & 27.29 & 27.38 & 27.60 & 27.50\\ \midrule[1pt]
\# Parameters & & 585,435 & 665,984 & 297,000& 1,370,000 & 428,970\\
\bottomrule[1.5pt]
\end{tabular}
}
\caption{Numerical results of various state-of-the art networks for single image super resolution with a comparable number of parameters.}
\label{tab:SR}
\end{table*}    

\begin{figure}
\includegraphics[width=\linewidth]{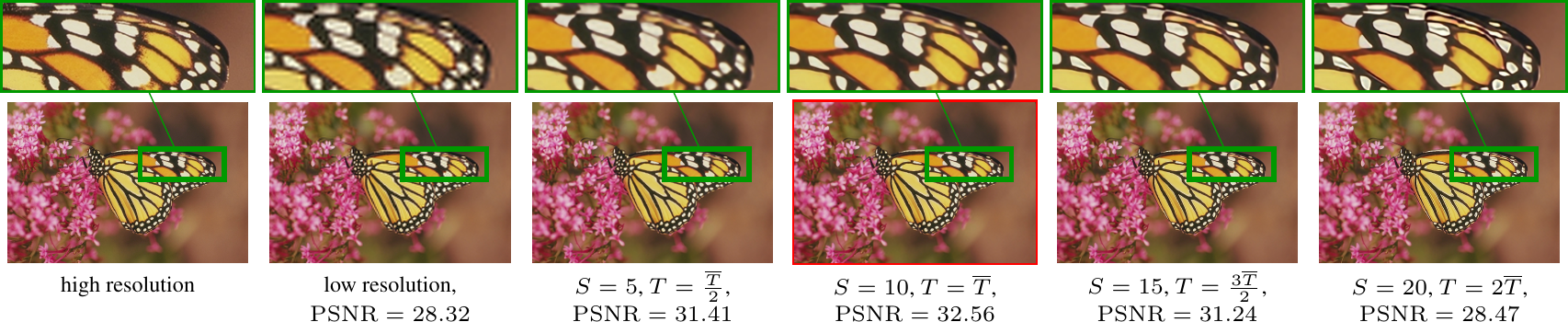}
\caption{
From left to right: High resolution, low resolution with scale factor~$4$, and resulting output of TDV$^3$ for $(S,T)\in\{(5,\frac{\overline{T}}{2}),(10,\overline{T}),(15,\frac{3\overline{T}}{2}),(20,2\overline{T})\}$,
where the optimal stopping time is $\overline{T}=0.098$.
Note that the best image is framed in red.
}
\label{fig:OptimalStoppingSequenceSR}
\end{figure}

\section{Conclusion}
In this paper, we have introduced total deep variation regularizers, which are motivated by established deep network architectures.
The inherent variational structure of our approach enables a rigorous mathematical understanding encompassing
an optimality condition for optimal stopping, a nonlinear eigenfunction analysis, and a sensitivity analysis that yields computable upper error bounds.
For image denoising and single image super-resolution, our model generates state-of-the-art results with an impressively low number of trainable parameters.
Moreover, to underline the versatility of TDVs for generic linear inverse problems, we successfully demonstrated their applicability for the challenging CT and MRI reconstruction tasks without requiring any additional training.

\subsection*{Acknowledgements}
\noindent
The authors thank Kerstin Hammernik for fruitful discussions and
acknowledge support from the ERC starting grant HOMOVIS (No. 640156) and ERC advanced grant OCLOC (No. 668998).

\appendix

\section{Notation and mathematical preliminaries}
In this section, we briefly introduce the notation and give a short presentation of the mathematical preliminaries required for the proofs.
All results can be found in~\cite{AdFo03,Te12}.

Let $\Omega\subset\R^l$ be a bounded domain.
We denote by $C^0(\overline\Omega,\R^n)$ the \emph{set of continuous functions} and by $C^k(\overline\Omega,\R^n)$, $k\in\N$, the \emph{set of $k$~times continuously differentiable functions} mapping form~$\overline\Omega$ to~$\R^n$.
The corresponding norms are
\begin{align}
\Vert f\Vert_{C^0(\overline\Omega,\R^n)}&\coloneqq\max_{x\in\overline\Omega}\Vert f(x)\Vert_2,\notag\\
\Vert f\Vert_{C^k(\overline\Omega,\R^n)}&\coloneqq\sum_{\vert\alpha\vert\leq k}\Vert D^\alpha f\Vert_{C^0(\overline\Omega,\R^n)},
\end{align}
respectively.
Here, we set $D^\alpha f=(\frac{\partial^{\alpha_1}f}{\partial x_1^{\alpha_1}},\ldots,\frac{\partial^{\alpha_n}f}{\partial x_{n}^{\alpha_n}})^\top$
for a multi-index $\alpha=(\alpha_1,\ldots,\alpha_n)\in\N_0^n$ with $\vert\alpha\vert=\sum_{i=1}^n\alpha_i$.
We define the \emph{H\"older space} as 
\begin{equation}
C^{k,s}(\overline{\Omega},\R^n)\coloneqq\{f\in C^k(\overline{\Omega},\R^n):\Vert f\Vert_{C^{k,s}(\overline{\Omega},\R^n)}<\infty\}
\end{equation}
for $k\in\N$ and $s\in(0,1]$, where the H\"older norm is
\begin{equation}
\Vert f\Vert_{C^{k,s}(\overline{\Omega},\R^n)}\coloneqq\Vert f\Vert_{C^k(\overline{\Omega},\R^n)}+\sup_{\substack{x,y\in\overline{\Omega}\\x\neq y}}\frac{\Vert f(x)-f(y)\Vert_2}{\Vert x-y\Vert_2^s}.
\end{equation}
For $1\leq p<\infty$, the \emph{Lebesgue space}~$L^p(\Omega,\R^n)$ is the set of all measurable mappings from~$\Omega$ to~$\R^n$ such that $\Vert f\Vert_{L^p(\Omega,\R^n)}<\infty$, where
\begin{equation}
\Vert f\Vert_{L^p(\Omega,\R^n)}\coloneqq\left(\int_\Omega\Vert f(x)\Vert_2^p\dx x\right)^\frac{1}{p}.
\end{equation}
The \emph{Sobolev space}~$H^m(\Omega,\R^n)$ with $m\in\N$ is the set of all functions~$f\in L^2(\Omega,\R^n)$ such that for all $\alpha\in\N_0^n$ with $\vert\alpha\vert\leq m$
a function~$f^{(\alpha)}\in L^2(\Omega,\R^n)$ exists satisfying
\begin{equation}
\int_\Omega\langle D^\alpha\eta,f\rangle\dx x=(-1)^{\vert\alpha\vert}\int_\Omega\langle\eta,f^{(\alpha)}\rangle\dx x
\end{equation}
for all smooth functions $\eta:\Omega\to\R^n$ with compact support.
The space $H^m(\Omega,\R^n)$ is endowed with the norm
\begin{equation}
\Vert f\Vert_{H^m(\Omega,\R^n)}\coloneqq\sum_{\vert\alpha\vert\leq m}\Vert f^{(\alpha)}\Vert_{L^2(\Omega,\R^n)}.
\end{equation}
In what follows, we list several theorems for later reference:
\begin{theorem}\label{thm:maximumDomainExistence}
We assume that for $x_\init\in\R^n$, $g\in C^0(\R\times\R^n,\R^n)$ and each~$T>0$ there exist constants $C_1(T),C_2(T)\in(0,\infty)$ such that
\begin{equation}
\Vert g(t,x)\Vert_2\leq C_1(T)+C_2(T)\Vert x\Vert_2
\end{equation}
for all $(t,x)\in[0,T]\times\R^n$.
Then all solutions of the initial value problem $\dot{x}(t)=g(t,x)$ with $x(0)=x_\init$ are defined for all~$t\geq 0$.
\end{theorem}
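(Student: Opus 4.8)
The plan is to combine a local existence result for continuous right-hand sides with an a priori bound obtained from Gr\"onwall's inequality, and then to invoke the standard continuation principle to rule out finite-time blow-up. First I would fix an arbitrary solution~$x$ of the initial value problem $\dot x(t)=g(t,x)$, $x(0)=x_\init$, defined on its maximal interval of existence $[0,T^*)$ with $T^*\in(0,\infty]$; the existence of at least one such solution on a nontrivial interval follows from Peano's theorem, since~$g$ is continuous. Arguing by contradiction, suppose $T^*<\infty$. Rewriting the equation in integral form, for $t\in[0,T^*)$ we have $x(t)=x_\init+\int_0^t g(s,x(s))\dx s$, hence by the growth assumption applied with $T=T^*$,
\begin{equation}
\Vert x(t)\Vert_2\leq\Vert x_\init\Vert_2+C_1(T^*)T^*+C_2(T^*)\int_0^t\Vert x(s)\Vert_2\dx s.
\end{equation}
Gr\"onwall's inequality then yields the uniform bound $\Vert x(t)\Vert_2\leq(\Vert x_\init\Vert_2+C_1(T^*)T^*)\exp(C_2(T^*)T^*)=:M$ for all $t\in[0,T^*)$.

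Next I would use this bound to show that~$x$ extends continuously to $t=T^*$. Since~$x$ takes values in the compact ball $\overline{B}_M(0)$ and~$g$ is continuous, $g$ is bounded by some constant~$L$ on $[0,T^*]\times\overline{B}_M(0)$; therefore $\Vert\dot x(t)\Vert_2=\Vert g(t,x(t))\Vert_2\leq L$, so~$x$ is Lipschitz on $[0,T^*)$ and the limit $x(T^*)\coloneqq\lim_{t\to T^{*-}}x(t)$ exists. Applying Peano's theorem at the point $(T^*,x(T^*))$ produces a solution on a short interval $[T^*,T^*+\delta)$; concatenating it with~$x$ gives a solution of the initial value problem on $[0,T^*+\delta)$, contradicting the maximality of~$T^*$. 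Hence $T^*=\infty$, i.e.\ every solution is defined for all $t\geq 0$.

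I do not expect a serious obstacle here. The one point that needs care is the continuation step, where one must verify that the a priori bound together with continuity of~$g$ forces the solution to converge as $t\to T^{*-}$ --- this is precisely what excludes the ``escape to infinity'' alternative in the blow-up criterion for maximal solutions of merely continuous vector fields --- and that the argument is phrased so as to cover \emph{every} solution rather than a distinguished one, since uniqueness of solutions is not assumed.
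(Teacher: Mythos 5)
Your proof is correct: the Gr\"onwall a priori bound rules out finite-time blow-up, and the Peano-plus-continuation argument (including the care taken about non-uniqueness and about the existence of the limit at $T^*$) is exactly the standard proof of this result. The paper itself does not prove the theorem but simply cites \cite[Theorem~2.17]{Te12}, which is established by essentially the same argument you give, so there is no substantive difference in approach.
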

\begin{proof}
See \cite[Theorem~2.17]{Te12}.
\end{proof}
\begin{theorem}[Gr\"onwall's inequality]\label{thm:Gronwall}
Suppose that the functions $y,\alpha,\beta\in C^0([0,T],\R)$ satisfy
\begin{equation}
y(t)\leq\alpha(t)+\int_0^t\beta(s)y(s)\dx s
\end{equation}
for all~$t\in[0,T]$. If $\alpha$ is monotonically increasing (i.e.~$\alpha(s)\leq\alpha(t)$ for $s\leq t\in[0,T]$) and $\beta\geq0$, then
\begin{equation}
y(t)\leq\alpha(t)\exp\left(\int_0^t\beta(s)\dx s\right)
\end{equation}
for all~$t\in[0,T]$.
\end{theorem}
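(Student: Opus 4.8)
The plan is to reduce the integral inequality to a linear differential one and then integrate it explicitly. First I would set $u(t)\coloneqq\int_0^t\beta(s)y(s)\dx s$, which lies in $C^1([0,T],\R)$ since $\beta y$ is continuous, so that $u'(t)=\beta(t)y(t)$ and $u(0)=0$. Combining the hypothesis $y(t)\leq\alpha(t)+u(t)$ with $\beta\geq0$ yields the differential inequality $u'(t)-\beta(t)u(t)\leq\beta(t)\alpha(t)$.

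Next I would multiply by the strictly positive integrating factor $\mu(t)\coloneqq\exp\bigl(-\int_0^t\beta(s)\dx s\bigr)$, which turns the left-hand side into the derivative of $t\mapsto\mu(t)u(t)$; integrating over $[0,t]$ and using $\mu(0)u(0)=0$ gives
\begin{equation*}
\mu(t)u(t)\leq\int_0^t\beta(s)\alpha(s)\mu(s)\dx s.
\end{equation*}
Here the monotonicity of $\alpha$ enters: since $\alpha(s)\leq\alpha(t)$ for $s\leq t$ and $\beta\mu=-\mu'$, the right-hand side is at most $\alpha(t)\int_0^t\beta(s)\mu(s)\dx s=\alpha(t)\bigl(1-\mu(t)\bigr)$. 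Dividing by $\mu(t)>0$ yields $u(t)\leq\alpha(t)\bigl(\mu(t)^{-1}-1\bigr)$, hence $y(t)\leq\alpha(t)+u(t)\leq\alpha(t)\mu(t)^{-1}=\alpha(t)\exp\bigl(\int_0^t\beta(s)\dx s\bigr)$, which is the claim.

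The computation is routine; the one place to be attentive is the monotonicity hypothesis on $\alpha$, without which the same steps only deliver the weaker estimate $y(t)\leq\alpha(t)+\int_0^t\alpha(s)\beta(s)\exp\bigl(\int_s^t\beta(\tau)\dx\tau\bigr)\dx s$. An alternative derivation iterates the inequality into itself and recognizes the partial sums of the exponential series with a vanishing integral remainder, but the integrating-factor argument is shorter and I would prefer it; since the result is entirely classical, one could equally well just refer to a standard text such as \cite{Te12}.
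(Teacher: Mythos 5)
Your argument is correct: with $u(t)=\int_0^t\beta(s)y(s)\dx s$ you get $u'-\beta u\leq\beta\alpha$ from the hypothesis and $\beta\geq0$, the integrating factor $\mu(t)=\exp\bigl(-\int_0^t\beta(s)\dx s\bigr)$ gives $\mu(t)u(t)\leq\int_0^t\beta(s)\alpha(s)\mu(s)\dx s$, and since $\beta\mu\geq0$ the monotonicity of $\alpha$ lets you pull $\alpha(t)$ out of the integral and use $\beta\mu=-\mu'$ to evaluate it as $\alpha(t)(1-\mu(t))$; dividing by $\mu(t)>0$ and adding $\alpha(t)$ gives the claim, and each step is sign-safe (no positivity of $\alpha$ is needed). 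The only difference from the paper is that the paper does not prove the statement at all: it simply cites \cite[Lemma~2.7]{Te12}, which is also the fallback you mention at the end. So you have supplied in full the standard textbook integrating-factor proof where the paper defers to the literature; your self-contained version buys transparency (in particular it makes visible exactly where the monotonicity of $\alpha$ is used, and what weaker bound survives without it), while the paper's citation buys brevity for an entirely classical lemma. Either is acceptable; your write-up would simply replace the reference with the proof.
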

\begin{proof}
See \cite[Lemma~2.7]{Te12}.
\end{proof}
Next, we present special cases of important results in functional analysis tailored to our particular application.
\begin{definition}
A sequence $\{g_k\}_{k\in\N}$ in $H^m(\Omega,\R^n)$ converges weakly to $g\in H^m(\Omega,\R^n)$ (denoted by $g_k\rightharpoonup g$) if for every element $g^\ast$ of the dual space of $H^m(\Omega,\R^n)$
$g^\ast(g_k)\to g^\ast(g)$ as~$k\to\infty$.
\end{definition}

\begin{theorem}\label{thm:compactness}
Let $I\subset\R$ be a bounded and open interval and $\{g_k\}_{k\in\N}$ in $H^m(I,\R^n)$ be a uniformly bounded sequence in $H^m(I,\R^n)$, i.e.~there exists
a constant $C>0$ such that $\Vert g_k\Vert_{H^m(I,\R^n)}<C$ for all~$k\in\N$.
Then there exists a subsequence $\{g_{k_l}\}_{l\in\N}$ of $\{g_k\}_{k\in\N}$ and $g\in H^m(I,\R^n)$ such that $g_{k_l}\rightharpoonup g$ in $H^m(I,\R^n)$ as~$l\to\infty$.
\end{theorem}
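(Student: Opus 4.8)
The plan is to exploit that $H^m(I,\R^n)$ is a separable Hilbert space and to reduce the statement to the classical extraction of a weakly convergent subsequence from a bounded sequence, via a diagonal argument together with the Riesz representation theorem; the essential content is the reflexivity of Hilbert spaces (equivalently, Banach--Alaoglu in this setting). First I would equip $H^m(I,\R^n)$ with the inner product
\begin{equation}
\langle f,g\rangle_{H^m}\coloneqq\sum_{\vert\alpha\vert\leq m}\int_I\langle f^{(\alpha)}(x),g^{(\alpha)}(x)\rangle\dx x,
\end{equation}
whose induced norm is equivalent to $\Vert\cdot\Vert_{H^m(I,\R^n)}$ from above, since the two differ only by replacing the $\ell^1$-sum over the finitely many multi-indices $\alpha$ with $\vert\alpha\vert\leq m$ by the corresponding $\ell^2$-sum, and all norms on a finite-dimensional space are equivalent. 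Completeness of $H^m(I,\R^n)$ in this norm is classical, so $(H^m(I,\R^n),\langle\cdot,\cdot\rangle_{H^m})$ is a Hilbert space.

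This space is moreover separable: as $I$ is a bounded interval, $L^2(I,\R^n)$ is separable, and the linear map $f\mapsto(f^{(\alpha)})_{\vert\alpha\vert\leq m}$ embeds $H^m(I,\R^n)$ isometrically into a finite product of copies of $L^2(I,\R^n)$, hence into a separable metric space. Now let $C>0$ satisfy $\Vert g_k\Vert_{H^m(I,\R^n)}<C$ for all $k$ and fix a countable dense subset $D=\{h_j\}_{j\in\N}$ of $H^m(I,\R^n)$. For each fixed $j$ the Cauchy--Schwarz inequality gives a constant (proportional to $C$) bounding $\vert\langle g_k,h_j\rangle_{H^m}\vert$ uniformly in $k$, so $\{\langle g_k,h_j\rangle_{H^m}\}_{k\in\N}$ is bounded in $\R$; by Bolzano--Weierstrass and a diagonal argument I pass to a single subsequence $\{g_{k_l}\}_{l\in\N}$ for which $\langle g_{k_l},h_j\rangle_{H^m}$ converges in $\R$ as $l\to\infty$ for every $j$. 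Combining the uniform bound $\Vert g_{k_l}\Vert_{H^m(I,\R^n)}<C$ with the density of $D$, a standard $\varepsilon/3$-argument upgrades this to convergence of $\langle g_{k_l},h\rangle_{H^m}$ for every $h\in H^m(I,\R^n)$.

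Finally I would identify the weak limit: the map $h\mapsto\Lambda(h)\coloneqq\lim_{l\to\infty}\langle g_{k_l},h\rangle_{H^m}$ is linear, and $\vert\Lambda(h)\vert\leq C\Vert h\Vert_{H^m(I,\R^n)}$ shows it is a bounded linear functional, so by the Riesz representation theorem there is $g\in H^m(I,\R^n)$ with $\Lambda(h)=\langle g,h\rangle_{H^m}$ for all $h$, i.e.\ $\langle g_{k_l},h\rangle_{H^m}\to\langle g,h\rangle_{H^m}$. Since, again by Riesz, every element of the dual space of $H^m(I,\R^n)$ has the form $h\mapsto\langle\,\cdot\,,h\rangle_{H^m}$, this is precisely the statement $g_{k_l}\rightharpoonup g$ in $H^m(I,\R^n)$ in the sense of the definition above, which completes the proof. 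There is no genuine obstacle in this argument; the only points needing a little care are making the norm-equivalence and separability claims precise and carrying out the $\varepsilon/3$-bookkeeping, all of which is standard functional analysis.
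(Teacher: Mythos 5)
Your proposal is correct: the paper does not prove this statement itself but merely cites a standard textbook result (weak sequential compactness of bounded sequences in a Hilbert space, \cite{Al16}), and your argument --- norm equivalence, separability, the diagonal extraction, and the Riesz representation theorem --- is precisely the standard proof of that cited fact. So you take essentially the same route, just written out in full.
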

\begin{proof}
See \cite[Chapter~8]{Al16}.
\end{proof}
\begin{theorem}[Sobolev embedding theorem]\label{thm:SobolevEmbedding}
Let $I\subset\R$ be a bounded and open interval.
If $k,m\in\N$ and $\alpha\in(0,1)$ satisfy $m-\frac{1}{2}>k+\alpha$, then the operator~$\Id:H^m(I,\R^n)\to C^{k,\alpha}(\overline{I},\R^n)$ is continuous and compact.
In particular, for every~$g\in H^m(I,\R^n)$ there exists a function~$\hat{g}\in C^{k,\alpha}(\overline{I},\R^n)$ that coincides with~$g$ almost everywhere such that
\begin{equation}
\Vert \hat{g}\Vert_{C^{k,\alpha}(\overline{I},\R^n)}\leq C\Vert g\Vert_{H^m(I,\R^n)}.
\end{equation}
The constant~$C$ solely depends on $k$, $m$, $I$ and $\alpha$.
\end{theorem}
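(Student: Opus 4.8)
\emph{Overview.} The plan is to reduce everything to the one-dimensional base case $H^1(I)\hookrightarrow C^{0,\alpha}(\overline I)$ for $\alpha<\tfrac12$ and then bootstrap in the order of differentiability $k$ by induction. The starting point is the fundamental theorem of calculus for Sobolev functions on an interval: every $g\in H^1(I)$ admits an absolutely continuous representative $\hat g$ with $\hat g(x)-\hat g(y)=\int_y^x g'(t)\dx t$ for all $x,y\in\overline I$. I would establish this by mollification: approximate $g$ in $H^1(I)$ by smooth functions $g_\varepsilon$ for which the identity is classical, and pass to the limit, noting that $g_\varepsilon\to g$ in $H^1(I)$ forces the right-hand side to converge by Cauchy--Schwarz in $L^2$ and the left-hand side along a subsequence pointwise.

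\emph{Base case.} Given the FTC representative, Cauchy--Schwarz yields $\vert\hat g(x)-\hat g(y)\vert\leq\vert x-y\vert^{1/2}\Vert g'\Vert_{L^2(I)}$, so $\hat g\in C^{0,1/2}(\overline I)$ with seminorm bounded by $\Vert g'\Vert_{L^2(I)}$; since $I$ is bounded this also bounds the $C^{0,\alpha}$ seminorm for every $\alpha<\tfrac12$. For the sup-norm, choose $x_0\in\overline I$ with $\vert\hat g(x_0)\vert\leq\vert I\vert^{-1/2}\Vert g\Vert_{L^2(I)}$ (intermediate value theorem applied to the average of $\vert\hat g\vert$, then Cauchy--Schwarz) and integrate from $x_0$ to obtain $\Vert\hat g\Vert_{C^0(\overline I)}\leq C\Vert g\Vert_{H^1(I)}$. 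Together these give the continuous embedding. For compactness, take a bounded sequence $\{g_j\}\subset H^1(I)$; the representatives $\{\hat g_j\}$ are uniformly bounded and uniformly $\tfrac12$-H\"older, hence equicontinuous, so by Arzel\`a--Ascoli a subsequence converges uniformly to some $h\in C^0(\overline I)$, and the uniform H\"older bound persists in the limit. Convergence in $C^{0,\alpha}$ for $\alpha<\tfrac12$ then follows from the interpolation inequality $[f]_{C^{0,\alpha}}\leq(2\Vert f\Vert_{C^0})^{1-2\alpha}[f]_{C^{0,1/2}}^{2\alpha}$ applied to the differences $\hat g_{j_l}-h$.

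\emph{Induction on $k$.} Assume the theorem holds with $k-1$ in place of $k$. If $g\in H^m(I)$ with $m-\tfrac12>k+\alpha$ and $k\geq1$, then $g'\in H^{m-1}(I)$ with $(m-1)-\tfrac12>(k-1)+\alpha$, so by the induction hypothesis $g'$ has a representative in $C^{k-1,\alpha}(\overline I)$ with norm $\leq C\Vert g'\Vert_{H^{m-1}(I)}\leq C\Vert g\Vert_{H^m(I)}$; by the base case $g$ itself has a representative $\hat g\in C^{0,\alpha}(\overline I)$. Since $\hat g$ is absolutely continuous with $\hat g'=g'$ a.e.\ and $g'$ now has a continuous representative, $\hat g\in C^1$, and iterating this $\hat g\in C^k(\overline I)$ with $D^k\hat g\in C^{0,\alpha}(\overline I)$; collecting the norm bounds gives $\Vert\hat g\Vert_{C^{k,\alpha}(\overline I)}\leq C\Vert g\Vert_{H^m(I)}$. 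Compactness is inherited analogously: a bounded sequence in $H^m(I)$ has derivatives bounded in $H^{m-1}(I)$, so a subsequence of the derivative-representatives converges in $C^{k-1,\alpha}$, a further subsequence of the functions converges uniformly by the base case, and uniform convergence together with $C^{k-1,\alpha}$-convergence of the derivatives upgrades to $C^{k,\alpha}$-convergence.

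\emph{Main obstacle.} The only genuinely non-elementary ingredient is the one-dimensional absolute-continuity/FTC statement for $H^1$ functions, which underpins both the H\"older estimate and the identification of weak with classical derivatives in the bootstrap; everything downstream is Cauchy--Schwarz, Arzel\`a--Ascoli and the H\"older interpolation inequality. A secondary point requiring care is that compactness holds into $C^{0,\alpha}$ only for $\alpha$ strictly below the endpoint $\tfrac12$ --- exactly the content of the strict inequality $m-\tfrac12>k+\alpha$ --- since the endpoint embedding $H^1(I)\hookrightarrow C^{0,1/2}(\overline I)$ is continuous but not compact.
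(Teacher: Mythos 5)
For this background theorem the paper gives no argument of its own: its ``proof'' is a pointer to the literature (Alt, Theorem~10.13), so your self-contained derivation necessarily takes a different route. What you write is the standard one-dimensional Morrey-type proof, and its core is sound: the absolutely continuous representative with $\hat g(x)-\hat g(y)=\int_y^x g'(t)\,\mathrm{d}t$, the Cauchy--Schwarz estimate giving the $C^{0,1/2}$ seminorm bound, the mean-value choice of $x_0$ for the sup-norm, Arzel\`a--Ascoli plus the interpolation inequality $[f]_{C^{0,\alpha}}\leq(2\Vert f\Vert_{C^0})^{1-2\alpha}[f]_{C^{0,1/2}}^{2\alpha}$ (valid precisely because $\alpha<\tfrac12$) for compactness into $C^{0,\alpha}$, and the bootstrap identifying weak with classical derivatives. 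This buys a transparent, elementary proof of exactly the special case the paper needs ($H^1\hookrightarrow\hookrightarrow C^0$ on an interval), at the price of not covering the general Sobolev--H\"older embedding that the cited reference provides.

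There is, however, one genuine (if small) gap in the induction. Your induction is on $k$ alone, and its base $k=0$ must cover \emph{all} pairs $(m,\alpha)$ with $m-\tfrac12>\alpha$, whereas your base-case paragraph only proves $H^1(I)\hookrightarrow C^{0,\alpha}(\overline I)$ for $\alpha<\tfrac12$. The theorem as stated allows $\alpha\in(0,1)$, and whenever $\alpha\geq\tfrac12$ the hypothesis forces $m-k\geq2$; tracing your induction down to $k=0$ then requires, e.g., the embedding $H^2(I)\hookrightarrow C^{0,\alpha}(\overline I)$ for $\alpha\in[\tfrac12,1)$, which neither the base case nor the step (restricted to $k\geq1$) establishes. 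The fix stays entirely inside your framework: if $m-k\geq2$, then $g^{(k+1)}\in H^1(I)$ is bounded by your base case, so the representative of $g^{(k)}$ is Lipschitz with constant $\leq C\Vert g\Vert_{H^m}$, hence lies in $C^{0,\alpha}(\overline I)$ for every $\alpha\in(0,1)$ on the bounded interval, and compactness follows as before from Arzel\`a--Ascoli together with the analogous interpolation $[f]_{C^{0,\alpha}}\leq(2\Vert f\Vert_{C^0})^{1-\alpha}[f]_{C^{0,1}}^{\alpha}$. Also note your closing remark that the strict inequality is ``exactly'' the non-compact endpoint $H^1\hookrightarrow C^{0,1/2}$ is only accurate in the case $m-k=1$; for $m-k\geq2$ the relevant endpoint is the Lipschitz one. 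Since the paper only ever invokes the theorem with $m=1$, $k=0$ and small $\alpha$, none of this affects its use, but it should be stated for the theorem as formulated.
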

\begin{proof}
See \cite[Theorem~10.13]{Al16}.
\end{proof}

\section{Existence of solutions}
In this section, we briefly recall the functional analytic setting of the optimal control problem.
Afterwards, we present the proof for the existence of minimizers of the optimal control problem.

Throughout this section, we consider the data fidelity term $\mathcal{D}(x,z)=\frac{1}{2}\Vert Ax-z\Vert_2^2$ for a fixed task-dependent $A\in\R^{lC\times nC}$ and fixed $z\in\R^{lC}$.
The total deep variation~$\mathcal{R}$ is parametrized by~$\theta\in\Theta$, where $\Theta\subset\R^p$ is a compact and convex set of admissible training parameters.
We highlight that the building blocks of~$\mathcal{R}$ are convolutional layers at different scales and the smooth log-student-t-distribution of the form $\phi(x)=\frac{1}{2\nu}\log(1+\nu x^2)$ for~$\nu>0$.
In particular, this structure implies
\begin{align}
\Vert\nabla_1\mathcal{R}(x,\theta)\Vert_2&\leq C_\mathcal{R}(\theta,\nu)\Vert x\Vert_2
\label{eq:growthEstimateR}
\end{align}
for a constant~$C_\mathcal{R}(\theta,\nu)>0$ depending on~$\theta$ and~$\nu$.
We define the energy functional as
\begin{equation}
\mathcal{E}(x,\theta,z)\coloneqq\mathcal{D}(x,z)+\mathcal{R}(x,\theta).
\end{equation}
Then, the gradient flow for the finite time interval~$(0,T)$ is
\begin{align}
\dot{\tilde{x}}(t)=&-\nabla_1\mathcal{E}(\tilde{x}(t),\theta,z)=f(\tilde{x}(t),\theta,z)\\
\coloneqq&-A^\top(A\tilde{x}(t)-z)-\nabla_1\mathcal{R}(\tilde{x}(t),\theta)
\end{align}
for $t\in(0,T)$, where $\tilde{x}(0)=x_\init$ for a fixed initial value $x_\init\in\R^{nC}$.
Here, $\tilde{x}:[0,T]\to\R^{nC}$ denotes the differentiable flow of $\mathcal{E}$, and $T>0$ refers to the time horizon.
We assume that $T\in[0,\Tmax]$ for a fixed $\Tmax>0$.
To formulate optimal stopping, we exploit the reparametrization $x(t)=\tilde{x}(tT)$, which results for $t\in(0,1)$ in the equivalent gradient flow formulation
\begin{equation}
\dot{x}(t)=Tf(x(t),\theta,z),\qquad x(0)=x_\init.
\end{equation}
For fixed~$N\in\N$, let $(x_\init^i,y^i,z^i)_{i=1}^N\in(\R^{nC}\times\R^{nC}\times\R^{lC})^N$ be a collection of~$N$ triplets of initial-target image pairs, and observed data independently drawn from a task-dependent fixed probability distribution.
Let $\loss:\R^{nC}\to\R_0^+$ be a convex, twice continuously differentiable and coercive (i.e.~$\lim_{\Vert x\Vert_2\to\infty}\loss(x)=+\infty$) function.
Then, the sampled optimal control problem reads as
\begin{equation}
\inf_{T\in[0,\Tmax],\,\theta\in\Theta}
\left\{J(T,\theta)\coloneqq\frac{1}{N}\sum_{i=1}^N\loss(x^i(1)-y^i)\right\}
\label{eq:objectiveFunctionApp}
\end{equation}
subject to the state equation for each sample
\begin{equation}
\dot{x}^i(t)=Tf(x^i(t),\theta,z^i),\qquad x^i(0)=x_\init^i\label{eq:gradientFlowSampleApp}
\end{equation}
for $i=1,\ldots,N$ and $t\in(0,1)$.
The next theorem ensures the existence of solutions to this optimal control problem.
\begin{theorem}[Existence of solutions]
The minimum in~\eqref{eq:objectiveFunctionApp} subject to the side conditions~\eqref{eq:gradientFlowSampleApp} is attained.
\end{theorem}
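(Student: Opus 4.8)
The plan is to use the direct method of the calculus of variations on the compact parameter set $[0,\Tmax]\times\Theta$. First I would take a minimizing sequence $(T_k,\theta_k)_{k\in\N}$ for $J$; since $[0,\Tmax]\times\Theta$ is compact (the interval is closed and bounded, $\Theta$ is compact by assumption), after passing to a subsequence we may assume $T_k\to\overline T\in[0,\Tmax]$ and $\theta_k\to\overline\theta\in\Theta$. The goal is then to show that the corresponding states $x_k^i$ converge, in a sufficiently strong topology, to the state $\overline x^i$ driven by $(\overline T,\overline\theta)$, so that by continuity of $\loss$ we get $J(\overline T,\overline\theta)=\lim_k J(T_k,\theta_k)=\inf J$.

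Next I would establish well-posedness and a priori bounds for the state equation~\eqref{eq:gradientFlowSampleApp}. Using the linear growth bound $\Vert\nabla_1\mathcal{R}(x,\theta)\Vert_2\le C_\mathcal{R}(\theta,\nu)\Vert x\Vert_2$ from~\eqref{eq:growthEstimateR} together with the linear growth of $x\mapsto -A^\top(Ax-z)$, the right-hand side $Tf(x,\theta,z)$ satisfies a linear growth estimate $\Vert Tf(x,\theta,z^i)\Vert_2\le C_1+C_2\Vert x\Vert_2$ uniform over $T\in[0,\Tmax]$ and $\theta\in\Theta$ (here compactness of $\Theta$ is used to make $C_\mathcal{R}(\theta,\nu)$ uniformly bounded; one should note that $\mathcal{R}$ is built from smooth log-student-t activations and convolutions, hence $\nabla_1\mathcal{R}$ is locally Lipschitz in $x$ and continuous in $\theta$). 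Theorem~\ref{thm:maximumDomainExistence} then gives global existence of each $x_k^i$ on $[0,1]$, and Gr\"onwall's inequality (Theorem~\ref{thm:Gronwall}) applied to $t\mapsto\Vert x_k^i(t)\Vert_2$ yields a bound $\Vert x_k^i(t)\Vert_2\le (\Vert x_\init^i\Vert_2+C_1)e^{C_2}$ uniform in $k$ and $t\in[0,1]$. Differentiating, $\Vert\dot x_k^i(t)\Vert_2$ is likewise uniformly bounded, so $\{x_k^i\}_k$ is uniformly bounded in $H^1((0,1),\R^{nC})$.

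Then I would extract limits. By Theorem~\ref{thm:compactness} there is a subsequence with $x_k^i\rightharpoonup\overline x^i$ weakly in $H^1((0,1),\R^{nC})$, and by the Sobolev embedding Theorem~\ref{thm:SobolevEmbedding} (with $m=1$, $k=0$, any $\alpha<\tfrac12$, so the embedding $H^1\hookrightarrow C^{0,\alpha}$ is compact) the convergence is uniform, $x_k^i\to\overline x^i$ in $C^0([0,1],\R^{nC})$. I would then pass to the limit in the integrated form of the state equation, $x_k^i(t)=x_\init^i+\int_0^t T_k f(x_k^i(\tau),\theta_k,z^i)\dx\tau$: uniform convergence of $x_k^i$, convergence $T_k\to\overline T$, $\theta_k\to\overline\theta$, and continuity of $f$ in all arguments (using continuity of $\nabla_1\mathcal{R}$ in $(x,\theta)$ and uniform boundedness to invoke dominated convergence) give $\overline x^i(t)=x_\init^i+\int_0^t\overline T f(\overline x^i(\tau),\overline\theta,z^i)\dx\tau$, i.e. $\overline x^i$ solves~\eqref{eq:gradientFlowSampleApp} with parameters $(\overline T,\overline\theta)$. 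In particular $x_k^i(1)\to\overline x^i(1)$, and by continuity of $\loss$,
\begin{equation}
\inf J=\lim_{k\to\infty}\frac1N\sum_{i=1}^N\loss(x_k^i(1)-y^i)=\frac1N\sum_{i=1}^N\loss(\overline x^i(1)-y^i)=J(\overline T,\overline\theta),
\end{equation}
so the infimum is attained at $(\overline T,\overline\theta)\in[0,\Tmax]\times\Theta$.

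The main obstacle I expect is the passage to the limit inside the state equation: one must verify that $\nabla_1\mathcal{R}(x_k^i(\tau),\theta_k)\to\nabla_1\mathcal{R}(\overline x^i(\tau),\overline\theta)$ in a way compatible with integration, which requires joint continuity of $\nabla_1\mathcal{R}$ in $(x,\theta)$ and a uniform integrable bound — both of which follow from the explicit smooth architecture (smooth activations $\phi$, finitely many bounded convolution operators, $\Theta$ compact) plus the uniform state bounds, but this is where the structural assumptions on $\mathcal{R}$ genuinely enter. A secondary point worth stating carefully is that coercivity of $\loss$, while not needed for existence here since the state bound already makes $x_k^i(1)$ bounded, guarantees that $J$ is finite-valued and hence the infimum is a real number.
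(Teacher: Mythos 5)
Your proposal is correct and follows essentially the same route as the paper's proof: a minimizing sequence on the compact set $[0,\Tmax]\times\Theta$, the linear growth bound~\eqref{eq:growthEstimateR} with Gr\"onwall to get uniform $H^1$ bounds on the states, weak $H^1$ compactness plus the compact embedding into $C^0([0,1],\R^{nC})$, identification of the limit as the state for the limiting parameters, and continuity of $\loss$ to conclude. The only cosmetic difference is that you pass to the limit in the integrated form of the state equation explicitly, whereas the paper invokes the smoothness of $f$ and standard ODE theory for that identification step.
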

\begin{proof}
Without restriction, we consider the case $N=1$ and omit the superscript.
Taking into account~\eqref{eq:growthEstimateR} we can estimate as follows:
\begin{equation}
\Vert Tf(x,\theta,z)\Vert_2
\leq T(\Vert A\Vert_2\Vert z\Vert_2)+T(\Vert A\Vert_2^2+C_\mathcal{R}(\theta,\nu))\Vert x\Vert_2.
\label{eq:upperEstimateRHS}
\end{equation}
Thus, Theorem~\ref{thm:maximumDomainExistence} implies that~$[0,1]$ is contained in the maximum domain of existence of the state equation~\eqref{eq:gradientFlowSampleApp} for any initial value~$x_\init\in\R^n$.
Let $(T_j,\theta_j)\in[0,\Tmax]\times\Theta$ be a minimizing sequence for~$J$ with an associated state~$x_j\in C^1([0,1],\R^n)$ such that~\eqref{eq:gradientFlowSampleApp} with $(T,\theta)$ replaced by $(T_j,\theta_j)$ holds true.
Due to the compactness of $[0,\Tmax]$ and~$\Theta$ we can deduce the existence of a subsequence (not relabeled) such that $(T_j,\theta_j)\to(T,\theta)\in[0,\Tmax]\times\Theta$.
Then, Theorem~\ref{thm:Gronwall} in conjunction with~\eqref{eq:upperEstimateRHS} implies the uniform boundedness of $\Vert x_j(t)\Vert_2$ and thus also
$\Vert\dot{x}_j(t)\Vert_2$ for all $t\in[0,1]$ and all $j\in\N$.
In particular, the sequence~$\{x_j\}_{j\in\N}$ is uniformly bounded in the Hilbert space $H^1((0,1),\R^n)$.
Thus, taking into account Theorem~\ref{thm:compactness} we can infer that a subsequence (not relabeled) converges weakly in $H^1((0,1),\R^n)$ to~$x\in H^1((0,1),\R^n)$.
Theorem~\ref{thm:SobolevEmbedding} ensures the compact embedding of $H^1((0,1),\R^n)$ into $C^0([0,1],\R^n)$,
which implies $x_j\to x$ in $C^0([0,1],\R^n)$ and $x(0)=x_\init$.
Since~$f$ is a smooth function, we can even conclude $x\in C^1([0,1],\R^n)$ due to the general theory of ordinary differential equations presented in~\cite[Chapter~I]{Ha80}.
Note that~$x$ is actually the unique solution to~\eqref{eq:gradientFlowSampleApp}.
This follows from the convergence of $x_j\to x$ in $C^0([0,1],\R^n)$ and the smoothness of~$f$.
Finally, the theorem follows from the continuity of~$\loss$, i.e.
\begin{equation}
\lim_{j\to\infty}J(T_j,\theta_j)=\lim_{j\to\infty}\loss(x_j(1)-y)=\loss(x(1)-y)=J(T,\theta),
\end{equation}
where we used the uniform convergence of $x_j\to x$ in $C^0([0,1],\R^n)$.
\end{proof}

\section{Discretized optimal control problem}
In this section, we present details for the fully discrete formulation of the sampled optimal control problem.
The state equation~\eqref{eq:gradientFlowSampleApp} is discretized using a semi-implicit scheme resulting in
\begin{equation}
x_{s+1}^i=x_s^i-\tfrac{T}{S}A^\top(Ax_{s+1}^i-z^i)-\tfrac{T}{S}\nabla_1\mathcal{R}(x_s^i,\theta)\in\R^{nC}
\label{eq:stateEquationDiscreteApp}
\end{equation}
for $s=0,\ldots,S-1$ and $i=1,\ldots,N$, where the \emph{depth}~$S\in\N$ is a priori fixed.
This equation is equivalent to $x_{s+1}^i=\widetilde{f}(x_s^i,T,\theta,z^i)$ with
\begin{align}
\widetilde{f}(x,T,\theta,z)&\coloneqq B(T)^{-1}(x+\tfrac{T}{S}(A^\top z-\nabla_1\mathcal{R}(x,\theta))),\notag\\
B(T)&\coloneqq\Id+\frac{T}{S}A^\top A.
\end{align}
The initial state satisfies $x_0^i=x_\init^i\in\R^{nC}$.
Then, the discretized sampled optimal control problem reads as
\begin{equation}
\inf_{T\in[0,\Tmax],\,\theta\in\Theta}
\left\{J_S(T,\theta)\coloneqq\frac{1}{N}\sum_{i=1}^N\loss(x_S^i-y^i)\right\}
\label{eq:discreteOptimalControlApp}
\end{equation}
subject to $x_{s+1}^i=\widetilde{f}(x_s^i,T,\theta,z^i)$.
We define the Lagrange functional
$L_S:(\R^{nC})^{N(S+1)}\times[0,\Tmax]\times\Theta\times(\R^{lC})^N\times(\R^{nC})^{N(S+1)}\to\R$
as follows:
\begin{equation}
(x,T,\theta,z,p)\mapsto J_S(T,\theta)+\sum_{i=1}^N\langle p_0^i,x_0^i-x_\init^i\rangle
+\sum_{s=0}^{S-1}\sum_{i=1}^N\langle p_{s+1}^i,x_{s+1}^i-\widetilde{f}(x_s^i,T,\theta,z^i)\rangle.
\end{equation}
The discretization of the associated adjoint equation is now implied by the discrete Pontryagin maximum principle:
\begin{theorem}
Let $(\overline{T},\overline{\theta})$ be a pair of optimal control parameters for~\eqref{eq:discreteOptimalControlApp} with the state equation~$\{\overline{x}_s^i\}_{s=0,\ldots,S}^{i=1,\ldots,N}$.
We define the Hamiltonian
\begin{align}
H&:\R^{nC}\times\R^{nC}\times[0,\Tmax]\times\Theta\times\R^{lC}\to\R,\notag\\
(x,p,T,\theta,z)&\mapsto\langle p,\widetilde{f}(x,T,\theta,z)\rangle.
\end{align}
If it is further assumed that $\nabla \widetilde{f}(\overline{x}_s^i,\overline{T},\overline{\theta},z^i)$ has full rank for all $i=1,\ldots,N$ and $s=0,\ldots,S$,
then there exists an adjoint process~$\{\overline{p}_s^i\}_{s=0,\ldots,S}^{i=1,\ldots,N}$ such that
\begin{align}
\overline{x}_{s+1}^i&=\nabla_2 H(\overline{x}_s^i,\overline{p}_{s+1}^i,\overline{T},\overline{\theta},z^i),\notag\\
\overline{x}_0^i&=x_\init^i,\notag\\
\overline{p}_s^i&=\nabla_1 H(\overline{x}_s^i,\overline{p}_{s+1}^i,\overline{T},\overline{\theta},z^i),\notag\\
\overline{p}_S^i&=-\frac{1}{N}\nabla\loss(\overline{x}_S^i-y^i)\label{eq:PMP}.
\end{align}
Finally, the solution is optimal in the sense that
\begin{equation}
\sum_{i=1}^N H(\overline{x}_s^i,\overline{p}_{s+1}^i,\overline{T},\overline{\theta},z^i)
\geq \sum_{i=1}^N H(\overline{x}_s^i,\overline{p}_{s+1}^i,T,\theta,z^i)
\end{equation}
for all $T\in[0,\Tmax]$ and $\theta\in\Theta$.
\end{theorem}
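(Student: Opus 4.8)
The plan is to present the statement as an application of the discrete Pontryagin maximum principle of~\cite{Ha66,LiHa18}: most of the work is to check that the semi-implicit flow~\eqref{eq:stateEquationDiscreteApp} meets the regularity hypotheses of that principle, after which three of the four relations in the theorem are true by construction, and the Hamiltonian inequality is the only genuinely substantive conclusion.

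First I would record the regularity of the dynamics. Since $B(T)=\Id+\tfrac{T}{S}A^\top A$ is symmetric positive definite for every $T\in[0,\Tmax]$, the map $\widetilde{f}(\cdot,T,\theta,z)$ in~\eqref{eq:discreteRightHandSide} is well defined, and because $\mathcal{R}$ is assembled from finitely many linear layers and the smooth potential $\phi$, the map $(x,T,\theta)\mapsto\widetilde{f}(x,T,\theta,z)$ is $C^\infty$, with $\nabla_1\mathcal{R}$ globally Lipschitz in $(x,\theta)$ by~\eqref{eq:Lipschitz} and of at most linear growth in $x$ by~\eqref{eq:growthEstimateR}. Hence for each admissible control the forward recursion $x_{s+1}^i=\widetilde{f}(x_s^i,T,\theta,z^i)$, $x_0^i=x_\init^i$, has a unique solution depending smoothly on $(T,\theta)$, so $J_S$ is a $C^2$ function on the compact convex set $[0,\Tmax]\times\Theta$. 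Given the optimal $(\overline{T},\overline{\theta})$ with states $\overline{x}_s^i$, I would then \emph{define} the adjoints by the backward recursion $\overline{p}_S^i=-\tfrac1N\nabla\loss(\overline{x}_S^i-y^i)$ and $\overline{p}_s^i=\nabla_1\widetilde{f}(\overline{x}_s^i,\overline{T},\overline{\theta},z^i)^\top\overline{p}_{s+1}^i$. With $H(x,p,T,\theta,z)=\langle p,\widetilde{f}(x,T,\theta,z)\rangle$ one has $\nabla_2 H=\widetilde{f}$ and $\nabla_1 H=(\nabla_1\widetilde{f})^\top p$, so the state recursion reads $\overline{x}_{s+1}^i=\nabla_2 H(\overline{x}_s^i,\overline{p}_{s+1}^i,\overline{T},\overline{\theta},z^i)$ and the defining adjoint recursion reads $\overline{p}_s^i=\nabla_1 H(\overline{x}_s^i,\overline{p}_{s+1}^i,\overline{T},\overline{\theta},z^i)$; together with the terminal and initial conditions these are exactly the first four identities of the theorem, and the explicit form~\eqref{eq:adjointEquationDiscrete} drops out of $\nabla_1\widetilde{f}(x,T,\theta,z)^\top=(\Id-\tfrac{T}{S}\nabla_1^2\mathcal{R}(x,\theta))B(T)^{-1}$ using the symmetry of $B(T)$ and of the Hessian $\nabla_1^2\mathcal{R}$.

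The substantive step is the Hamiltonian inequality. Since $J_S(T,\theta)=L_S(x(T,\theta),T,\theta,z,\overline{p})$ for the states solving the flow, and since the state equations make every $\partial L_S/\partial p_s^i$ vanish while the adjoint equations and terminal condition make every $\partial L_S/\partial x_s^i$ vanish at $(\overline{T},\overline{\theta})$, the standard adjoint identity gives
\[
\nabla_{(T,\theta)}J_S(\overline{T},\overline{\theta})=-\sum_{s=0}^{S-1}\sum_{i=1}^N\nabla_{(T,\theta)}H(\overline{x}_s^i,\overline{p}_{s+1}^i,\overline{T},\overline{\theta},z^i),
\]
where $\nabla_{(T,\theta)}H$ denotes the gradient in the explicit control slots of $H$; here I use that $J_S$ depends on $(T,\theta)$ only through the states. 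Because $(\overline{T},\overline{\theta})$ minimizes $J_S$ over the convex set $[0,\Tmax]\times\Theta$, the first-order variational inequality $\langle\nabla_{(T,\theta)}J_S(\overline{T},\overline{\theta}),(T,\theta)-(\overline{T},\overline{\theta})\rangle\ge0$ holds for all admissible $(T,\theta)$, which is the summed, first-order form of the asserted statement. To promote this to the per-step global inequality I would invoke the discrete maximum principle of~\cite{Ha66,LiHa18}: the $C^\infty$ regularity and growth bounds on $\widetilde{f}$, the compactness and convexity of $\Theta$, and the full-rank hypothesis on $\nabla\widetilde{f}(\overline{x}_s^i,\overline{T},\overline{\theta},z^i)$ are precisely its assumptions, with the full-rank (invertibility of the layer Jacobians) condition ruling out degenerate multipliers, so that the cost weight is normalized to $1$ and $\overline{p}_S^i$ equals $-\tfrac1N\nabla\loss(\overline{x}_S^i-y^i)$ rather than a multiple thereof.

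The main obstacle is exactly this last upgrade. Unlike the continuous-time principle, the strong \emph{per-step} discrete maximum principle fails for general systems, so one cannot simply perturb the control at a single stage and read off a global inequality; the full-rank assumption on the layer maps $\widetilde{f}(\cdot,T,\theta,z)$ is what restores it, through the directional convexity of the one-step reachable sets required in~\cite{Ha66} or the backward construction of compensating trajectories used in~\cite{LiHa18}. Thus the delicate part of the argument is checking that this hypothesis places our discretized flow within the scope of those theorems; the Lagrangian bookkeeping above is routine by comparison.
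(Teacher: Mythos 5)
Your proposal is correct and follows essentially the same route as the paper: the four identities come from the Lagrangian/adjoint bookkeeping with $H(x,p,T,\theta,z)=\langle p,\widetilde{f}(x,T,\theta,z)\rangle$, while the genuinely substantive part---the global per-stage Hamiltonian maximization---is delegated to the discrete Pontryagin maximum principle of \cite{Ha66,LiHa18}, exactly as the paper does (its proof is a citation to \cite[Theorem~3]{LiHa18} and \cite{Ha66} plus a heuristic Lagrangian derivation of the adjoint system). If anything, you are more explicit than the paper in checking the smoothness and full-rank hypotheses and in noting that the first-order variational inequality alone does not imply the per-step global inequality, which is precisely what the cited results supply.
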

\begin{proof}
For a complete proof, see \cite[Theorem~3]{LiHa18} and \cite{Ha66}.
In what follows, we give a heuristic justification for the definition of the Hamiltonian.
To this end, we consider the Lagrange functional~$L_S$ with multipliers $\{p_s^i\}_{s=0,\ldots,S}^{i=1,\ldots,N}$:
\begin{align}
L_S(x,T,\theta,z,p)
=\frac{1}{N}\sum_{i=1}^N\loss(x_S^i-y^i)+\sum_{i=1}^N\langle p_0^i,x_0^i-x_\init^i\rangle
+\sum_{s=0}^{S-1}\sum_{i=1}^N\left(\langle p_{s+1}^i,x_{s+1}^i\rangle-H(x_s^i,p_{s+1}^i,T,\theta,z^i)\right).\notag
\end{align}
Taking the derivatives with respect to $\{x_s^i\}_{s=0,\ldots,S}^{i=1,\ldots,N}$ and $\{p_s^i\}_{s=0,\ldots,S}^{i=1,\ldots,N}$ yields~\eqref{eq:PMP}.
\end{proof}
Thus, \eqref{eq:PMP} readily implies
\begin{align}
\overline{p}_s^i&=\nabla_1 H(\overline{x}_s^i,\overline{p}_{s+1}^i,\overline{T},\overline{\theta},z^i)
=(\Id-\tfrac{\overline{T}}{S}\nabla_1^2\mathcal{R}(\overline{x}_s^i,\overline{\theta}))(\Id+\tfrac{\overline{T}}{S}A^\top A)^{-1}\overline{p}_{s+1}^i,\notag\\
\overline{p}_S^i&=-\frac{1}{N}\nabla\loss(\overline{x}_S^i-y^i).
\label{eq:adjointEquationDiscreteApp}
\end{align}
The next theorem states an exact computable condition for the optimal stopping time, which is of vital importance for the numerical optimization:
\begin{theorem}[Optimality condition]
Let $(\overline{T},\overline{\theta})$ be a stationary point of~$J_S$ with associated states~$\overline{x}_s^i$ and adjoint states $\overline{p}_s^i$ satisfying~\eqref{eq:stateEquationDiscreteApp} and \eqref{eq:adjointEquationDiscreteApp}.
We further assume that $\nabla \widetilde{f}(\overline{x}_s^i,\overline{T},\overline{\theta},z^i)$ has full rank for all $i=1,\ldots,N$ and $s=0,\ldots,S$.
Then, we have
\begin{equation}
-\frac{1}{N}\sum_{s=0}^{S-1}\sum_{i=1}^N\langle\overline{p}_{s+1}^i,B(\overline{T})^{-1}(\overline{x}_{s+1}^i-\overline{x}_s^i)\rangle=0.
\label{eq:optimalTDiscreteApp}
\end{equation}
\end{theorem}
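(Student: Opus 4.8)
The plan is to derive the optimality condition~\eqref{eq:optimalTDiscreteApp} by differentiating the Lagrange functional~$L_S$ with respect to the control~$T$ and evaluating at the stationary point. Since $(\overline{T},\overline{\theta})$ is a stationary point of~$J_S$ subject to the discrete state equation, at the saddle point of~$L_S$ the partial derivative $\partial_T L_S$ must vanish. Because the terms $\langle p_0^i, x_0^i - x_\init^i\rangle$ and $\langle p_{s+1}^i, x_{s+1}^i\rangle$ do not depend on $T$, only the terms $-H(x_s^i, p_{s+1}^i, T, \theta, z^i) = -\langle p_{s+1}^i, \widetilde{f}(x_s^i, T, \theta, z^i)\rangle$ contribute, so the condition reduces to
\begin{equation}
\sum_{s=0}^{S-1}\sum_{i=1}^N\langle\overline{p}_{s+1}^i,\partial_T\widetilde{f}(\overline{x}_s^i,\overline{T},\overline{\theta},z^i)\rangle=0.\notag
\end{equation}
The crux is therefore to compute $\partial_T\widetilde{f}$ in a form that reproduces the stated expression.

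Next I would compute $\partial_T\widetilde{f}(x,T,\theta,z)$ explicitly. Writing $\widetilde{f} = B(T)^{-1}g(x,T)$ with $g(x,T) = x + \tfrac{T}{S}(A^\top z - \nabla_1\mathcal{R}(x,\theta))$ and $B(T) = \Id + \tfrac{T}{S}A^\top A$, the product rule together with the identity $\partial_T(B(T)^{-1}) = -B(T)^{-1}(\partial_T B(T))B(T)^{-1} = -\tfrac{1}{S}B(T)^{-1}A^\top A\,B(T)^{-1}$ gives
\begin{equation}
\partial_T\widetilde{f} = -\tfrac{1}{S}B(T)^{-1}A^\top A\,B(T)^{-1}g(x,T) + \tfrac{1}{S}B(T)^{-1}(A^\top z - \nabla_1\mathcal{R}(x,\theta)).\notag
\end{equation}
The key algebraic observation is that $B(T)^{-1}g(x,T) = \widetilde{f}(x,T,\theta,z) = x_{s+1}^i$ when evaluated along the trajectory, and the remaining factor rearranges so that $\partial_T\widetilde{f}$ can be written as $\tfrac{1}{T}B(T)^{-1}(x_{s+1}^i - x_s^i)$: indeed, from the discrete state equation~\eqref{eq:stateEquationDiscreteApp} one has $B(T)(x_{s+1}^i - x_s^i) = \tfrac{T}{S}(A^\top z^i - A^\top A x_s^i - \nabla_1\mathcal{R}(x_s^i,\theta))$, and one checks directly that $S\,\partial_T\widetilde{f}$ equals $B(T)^{-1}$ applied to exactly $\tfrac{S}{T}(x_{s+1}^i - x_s^i)$ after substituting $g$ and using $A^\top A\,x_{s+1}^i = A^\top A\,B(T)^{-1}g$. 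Substituting this into the stationarity condition and dividing through by the (positive, hence nonzero) constant $\overline{T}$ and multiplying by $-\tfrac{1}{N}$ yields precisely~\eqref{eq:optimalTDiscreteApp}.

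The main obstacle I anticipate is the bookkeeping in the second step: carefully verifying the algebraic identity $S\,\partial_T\widetilde{f}(\overline{x}_s^i,\overline{T},\overline{\theta},z^i) = \tfrac{S}{\overline{T}}B(\overline{T})^{-1}(\overline{x}_{s+1}^i - \overline{x}_s^i)$ requires manipulating the term $B(T)^{-1}A^\top A\,B(T)^{-1}g$ and matching it against $-A^\top A\,x_{s+1}^i + (A^\top z - \nabla_1\mathcal{R}(x,\theta))$; one must use $x_{s+1}^i = B(T)^{-1}g$ and the defining relation for $x_{s+1}^i - x_s^i$ consistently. A secondary technical point is justifying that the stationarity of $J_S$ on the constraint manifold, under the full-rank assumption on $\nabla\widetilde{f}$ (which guarantees that the adjoint states and Lagrange multipliers are well-defined and that the constraint qualification holds), indeed implies $\partial_T L_S = 0$ at $(\overline{x},\overline{T},\overline{\theta},z,\overline{p})$; this is standard Lagrange-multiplier theory for the discrete optimal control problem but should be stated. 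Finally, one should note that the interior-stationarity interpretation presumes $\overline{T}\in(0,\Tmax)$; the boundary cases $\overline{T}\in\{0,\Tmax\}$ give the corresponding inequality, but the clean identity~\eqref{eq:optimalTDiscreteApp} is the interior case, which is what is observed numerically.
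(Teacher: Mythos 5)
Your proposal is correct and follows essentially the same route as the paper: differentiate the Lagrange functional with respect to $T$ at the stationary point, compute $\partial_T\widetilde{f}$ via the derivative-of-the-inverse identity, and use the discrete state equation to rewrite it as $\tfrac{1}{T}B(T)^{-1}(\overline{x}_{s+1}^i-\overline{x}_s^i)$, after which multiplying by $\overline{T}/N$ gives~\eqref{eq:optimalTDiscreteApp}. Your added remarks on the constraint qualification from the full-rank assumption and on the interior case $\overline{T}\in(0,\Tmax)$ are sensible clarifications of points the paper leaves implicit.
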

\begin{proof}
We compute the derivative of~$\widetilde{f}(x_s^i,T,\theta,z^i)$ with respect to~$T$:
\begin{align}
\nabla_2\widetilde{f}(x_s^i,T,\theta,z^i)
=&\tfrac{1}{S}B(T)^{-1}\Big(A^\top z-\nabla_1\mathcal{R}(x_s^i,\theta)
-A^\top AB(T)^{-1}(x_s^i+\tfrac{T}{S}(A^\top z-\nabla_1\mathcal{R}(x_s^i,\theta)))\Big)\notag\\
=&\frac{1}{T}B(T)^{-1}(x_{s+1}^i-x_s^i),
\end{align}
where we use
\begin{equation}
\tfrac{\dx}{\dx T}(B(T)^{-1})=-B(T)^{-1}\left(\tfrac{\dx}{\dx T}B(T)\right)B(T)^{-1}.
\end{equation}
The existence of the Lagrange multiplier~$p$ is readily implied by the rank condition of~$\nabla\widetilde{f}$ \cite[Chapter~1]{ItKu08}.
At the optimum $(\overline{x},\overline{T},\overline{\theta},z,\overline{p})$ the Lagrange functional~$L_S$ 
satisfies the following optimality condition with respect to~$T$:
\begin{equation}
\nabla_2 L_S(\overline{x},\overline{T},\overline{\theta},z,\overline{p})
=-\sum_{s=0}^{S-1}\sum_{i=1}^N\langle\overline{p}_{s+1}^i,\nabla_2\widetilde{f}(\overline{x}_s^i,\overline{T},\overline{\theta},z^i)\rangle
=-\sum_{s=0}^{S-1}\sum_{i=1}^N\langle\overline{p}_{s+1}^i,\tfrac{1}{\overline{T}}B(\overline{T})^{-1}(\overline{x}_{s+1}^i-\overline{x}_s^i)\rangle,
\end{equation}
which readily implies~\eqref{eq:optimalTDiscreteApp}.
\end{proof}
In the case of image denoising, the scaling $B(\overline{T})$ in the optimality condition~\eqref{eq:optimalTDiscreteApp} is neglected.

An important property of any learning based method is the dependency of the learned parameters with respect to different training datasets~\cite{GeBi92}.
\begin{theorem}
Let $(T,\theta),(\widetilde{T},\widetilde{\theta})$ be two pairs of control parameters obtained from two different training datasets.
We denote by~$x,\widetilde x\in(\R^{nC})^{(S+1)}$ two solutions of the state equation with the same observed data~$z$ and initial condition~$x_\init$, i.e.
\begin{equation}
x_{s+1}=\widetilde{f}(x_s,T,\theta,z),\quad
\widetilde{x}_{s+1}=\widetilde{f}(\widetilde{x}_s,\widetilde{T},\widetilde{\theta},z)
\end{equation}
for $s=1,\ldots,S-1$ and $x_0=\widetilde{x}_0=x_\init$.
Let $L_\mathcal{R}$ be the Lipschitz constant of~$\mathcal{R}$, i.e.
\begin{equation}
\Vert\nabla_1\mathcal{R}(x,\theta)-\nabla_1\mathcal{R}(\widetilde{x},\widetilde{\theta})\Vert_2\leq L_\mathcal{R}
\left\Vert
\begin{pmatrix}
x\\
\theta
\end{pmatrix}
-
\begin{pmatrix}
\widetilde{x}\\
\widetilde{\theta}
\end{pmatrix}
\right\Vert_2
\end{equation}
for all~$x,\widetilde{x}\in\R^{nC}$ and all~$\theta,\widetilde{\theta}\in\Theta$.
Then,
\begin{align}
\Vert x_{s+1}-\widetilde{x}_{s+1}\Vert_2
\leq&\Vert B(T)^{-1}-B(\widetilde{T})^{-1}\Vert_2
\Big(\Vert x_s\Vert_2+\tfrac{T}{S}\Vert A^\top z\Vert_2\notag\\
&+\tfrac{T}{S}\Vert\nabla_1\mathcal{R}(x_s,\theta)\Vert_2\Big)+\Vert B(\widetilde{T})^{-1}\Vert_2
\Big(\Vert x_s-\widetilde{x}_s\Vert_2\notag\\
&+\tfrac{\vert T-\widetilde{T}\vert}{S}\Vert A^\top z\Vert_2+\tfrac{\vert T-\widetilde{T}\vert}{S}\Vert\nabla_1\mathcal{R}(\widetilde{x}_s,\widetilde{\theta})\Vert_2
+\tfrac{T}{S}L_\mathcal{R}\Vert(x,\theta)^\top-(\widetilde{x},\widetilde{\theta})^\top\Vert_2\Big).
\end{align}
\end{theorem}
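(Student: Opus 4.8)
The plan is to expand both iterates through the closed form $\widetilde f(x,T,\theta,z)=B(T)^{-1}\bigl(x+\tfrac{T}{S}(A^\top z-\nabla_1\mathcal{R}(x,\theta))\bigr)$ and then estimate the difference by one carefully arranged triangle inequality. Introduce the abbreviations
\[
v\coloneqq x_s+\tfrac{T}{S}\bigl(A^\top z-\nabla_1\mathcal{R}(x_s,\theta)\bigr),\qquad
\widetilde v\coloneqq \widetilde x_s+\tfrac{\widetilde T}{S}\bigl(A^\top z-\nabla_1\mathcal{R}(\widetilde x_s,\widetilde\theta)\bigr),
\]
so that $x_{s+1}=B(T)^{-1}v$ and $\widetilde x_{s+1}=B(\widetilde T)^{-1}\widetilde v$. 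Inserting the intermediate term $B(\widetilde T)^{-1}v$ gives
\[
x_{s+1}-\widetilde x_{s+1}=\bigl(B(T)^{-1}-B(\widetilde T)^{-1}\bigr)v+B(\widetilde T)^{-1}(v-\widetilde v).
\]
The reason for splitting this way (rather than via $B(T)^{-1}\widetilde v$) is that it makes the first summand depend only on $x_s$, $\theta$, $T$, which is exactly what the first bracket of the claimed bound contains.

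For the first summand, submultiplicativity of the spectral norm gives $\Vert(B(T)^{-1}-B(\widetilde T)^{-1})v\Vert_2\le\Vert B(T)^{-1}-B(\widetilde T)^{-1}\Vert_2\,\Vert v\Vert_2$, and the triangle inequality yields $\Vert v\Vert_2\le\Vert x_s\Vert_2+\tfrac{T}{S}\Vert A^\top z\Vert_2+\tfrac{T}{S}\Vert\nabla_1\mathcal{R}(x_s,\theta)\Vert_2$, which reproduces the first bracket. For the second summand we again use submultiplicativity, $\Vert B(\widetilde T)^{-1}(v-\widetilde v)\Vert_2\le\Vert B(\widetilde T)^{-1}\Vert_2\,\Vert v-\widetilde v\Vert_2$, so it remains to bound $\Vert v-\widetilde v\Vert_2$. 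Writing $g\coloneqq A^\top z-\nabla_1\mathcal{R}(x_s,\theta)$ and $\widetilde g\coloneqq A^\top z-\nabla_1\mathcal{R}(\widetilde x_s,\widetilde\theta)$, the elementary identity $\tfrac{T}{S}g-\tfrac{\widetilde T}{S}\widetilde g=\tfrac{T}{S}(g-\widetilde g)+\tfrac{T-\widetilde T}{S}\widetilde g$ together with $v-\widetilde v=(x_s-\widetilde x_s)+\tfrac{T}{S}g-\tfrac{\widetilde T}{S}\widetilde g$ gives
\[
\Vert v-\widetilde v\Vert_2\le\Vert x_s-\widetilde x_s\Vert_2+\tfrac{T}{S}\Vert g-\widetilde g\Vert_2+\tfrac{\vert T-\widetilde T\vert}{S}\Vert\widetilde g\Vert_2.
\]
Here $\Vert g-\widetilde g\Vert_2=\Vert\nabla_1\mathcal{R}(x_s,\theta)-\nabla_1\mathcal{R}(\widetilde x_s,\widetilde\theta)\Vert_2\le L_\mathcal{R}\Vert(x_s,\theta)^\top-(\widetilde x_s,\widetilde\theta)^\top\Vert_2$ by the assumed Lipschitz property of $\nabla_1\mathcal{R}$, and $\Vert\widetilde g\Vert_2\le\Vert A^\top z\Vert_2+\Vert\nabla_1\mathcal{R}(\widetilde x_s,\widetilde\theta)\Vert_2$.

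Substituting these two estimates, multiplying by $\Vert B(\widetilde T)^{-1}\Vert_2$, and adding the bound for the first summand yields the asserted inequality. Since every step is only a triangle inequality, submultiplicativity of the operator norm, or the Lipschitz hypothesis, there is no genuine obstacle; the only care needed is the bookkeeping — the choice of intermediate term $B(\widetilde T)^{-1}v$ and the split of $\tfrac{T}{S}g-\tfrac{\widetilde T}{S}\widetilde g$ — so that the resulting terms match the stated right-hand side. (Note that in the statement $(x,\theta)^\top-(\widetilde x,\widetilde\theta)^\top$ is shorthand for $(x_s,\theta)^\top-(\widetilde x_s,\widetilde\theta)^\top$.)
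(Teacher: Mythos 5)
Your proposal is correct and follows essentially the same route as the paper: the paper also writes $x_{s+1}=B(T)^{-1}g(x_s,T,\theta)$, $\widetilde x_{s+1}=B(\widetilde T)^{-1}g(\widetilde x_s,\widetilde T,\widetilde\theta)$ and obtains the bound via the same splitting into $(B(T)^{-1}-B(\widetilde T)^{-1})g(x_s,T,\theta)$ plus $B(\widetilde T)^{-1}\bigl(g(x_s,T,\theta)-g(\widetilde x_s,\widetilde T,\widetilde\theta)\bigr)$, followed by the triangle inequality, operator-norm submultiplicativity, and the Lipschitz hypothesis. Your write-up merely makes explicit the intermediate term and the split of $\tfrac{T}{S}g-\tfrac{\widetilde T}{S}\widetilde g$ that the paper leaves implicit.
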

\begin{proof}
Let $g(x,T,\theta)\coloneqq x+\frac{T}{S}A^\top z-\frac{T}{S}\nabla_1\mathcal{R}(x,\theta)$.
By definition, 
\begin{align}
x_{s+1}&=B(T)^{-1}g(x_s,T,\theta),\notag\\
\widetilde{x}_{s+1}&=B(\widetilde{T})^{-1}g(\widetilde{x}_s,\widetilde{T},\widetilde{\theta}).
\end{align}
Thus, we can estimate the upper bound as follows:
\begin{align}
\Vert x_{s+1}-\widetilde{x}_{s+1}\Vert_2
=&\Vert B(T)^{-1}g(x_s,T,\theta)-B(\widetilde{T})^{-1}g(\widetilde{x}_s,\widetilde{T},\widetilde{\theta})\Vert_2\notag\\
\leq&\Vert B(T)^{-1}-B(\widetilde{T})^{-1}\Vert_2
\Big(\Vert x_s\Vert_2+\tfrac{T}{S}\Vert A^\top z\Vert_2
+\tfrac{T}{S}\Vert\nabla_1\mathcal{R}(x_s,\theta)\Vert_2\Big)+\Vert B(\widetilde{T})^{-1}\Vert_2
\Big(\Vert x_s-\widetilde{x}_s\Vert_2\notag\\
&+\tfrac{\vert T-\widetilde{T}\vert}{S}\Vert A^\top z\Vert_2+\tfrac{\vert T-\widetilde{T}\vert}{S}\Vert\nabla_1\mathcal{R}(\widetilde{x}_s,\widetilde{\theta})\Vert_2
+\tfrac{T}{S}L_\mathcal{R}\Vert(x_s,\theta)^\top-(\widetilde{x}_s,\widetilde{\theta})^\top\Vert_2\Big).\notag
\end{align}
This concludes the proof.
\end{proof}
Hence, this theorem provides a computable upper bound for the norm difference of two states with observed value~$z$ and initial value~$x_\init$ evaluated at the same step~$s$,
which amounts to a sensitivity analysis w.r.t.~the training data.

\section{Additional numerical results}
Figure~\ref{fig:denoising} depicts four additional results for the image denoising task computed with TDV$_{25}^3$ for a noise level $\sigma=25$.
The best results in terms of the PSNR value are obtained for $S=10$ and are highlighted in red.

\begin{figure}
\centering
\includegraphics[width=\linewidth]{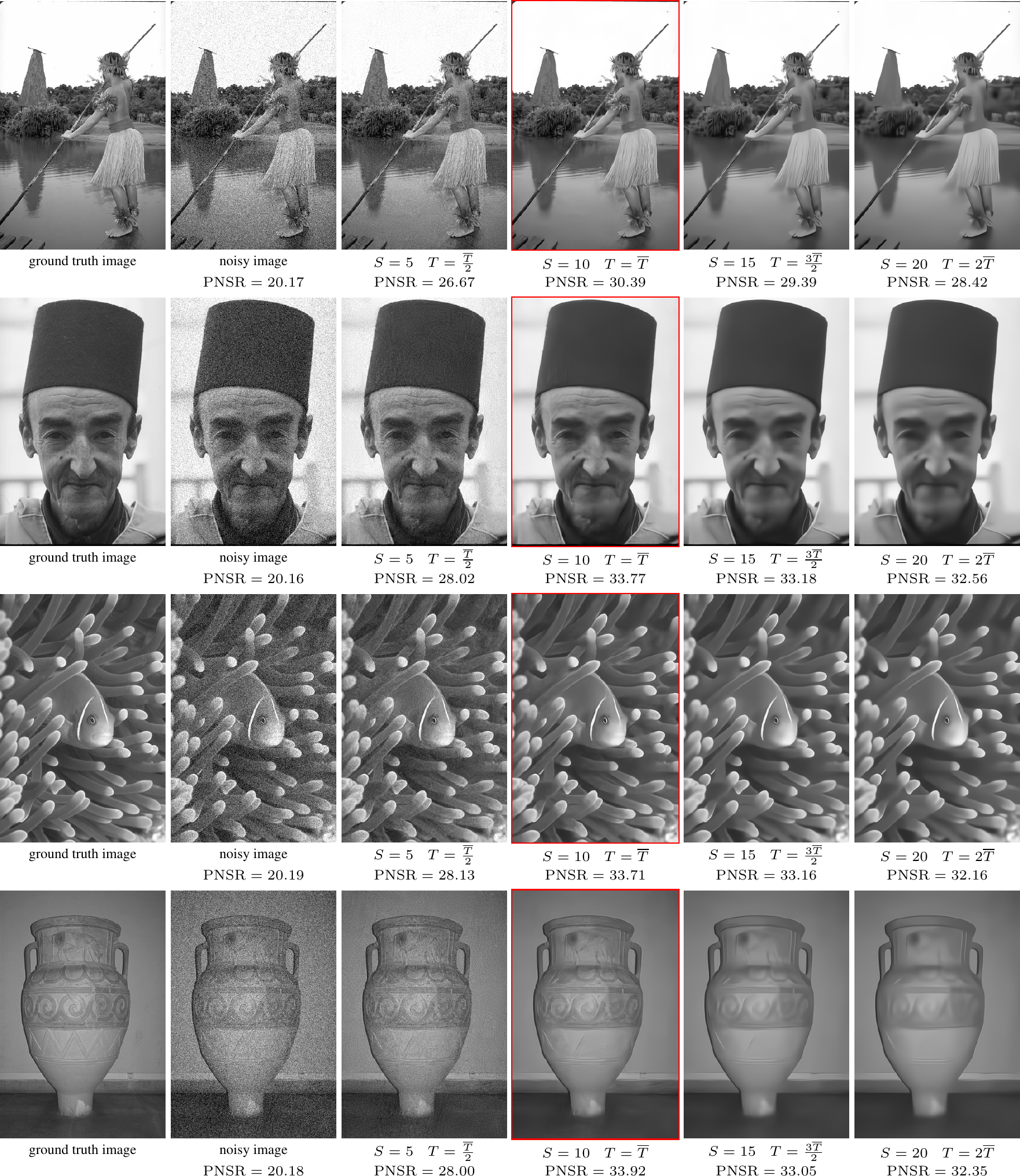}
\caption{From left to right: Ground truth, noisy input with noise level~$\sigma=25$ and resulting output of TDV$^3$ for $(S,T)\in\{(5,\frac{\overline{T}}{2}),(10,\overline{T}),(15,\frac{3\overline{T}}{2}),(20,2\overline{T})\}$,
where the optimal stopping time is $\overline{T}=0.0297$.
Note that the best image is framed in red.}
\label{fig:denoising}
\end{figure}

Figure~\ref{fig:sr} shows four image sequences associated with image super-resolution with a scale factor~$4$.
The algorithm generates a transition from the initial upsampled ($S=0$) to a cartoon-like image with strongly pronounced edges ($S=20$).
Here, the optimal stopping time is image dependent and determines the best image in terms of PSNR value.

\begin{figure}
\centering
\includegraphics[width=\linewidth]{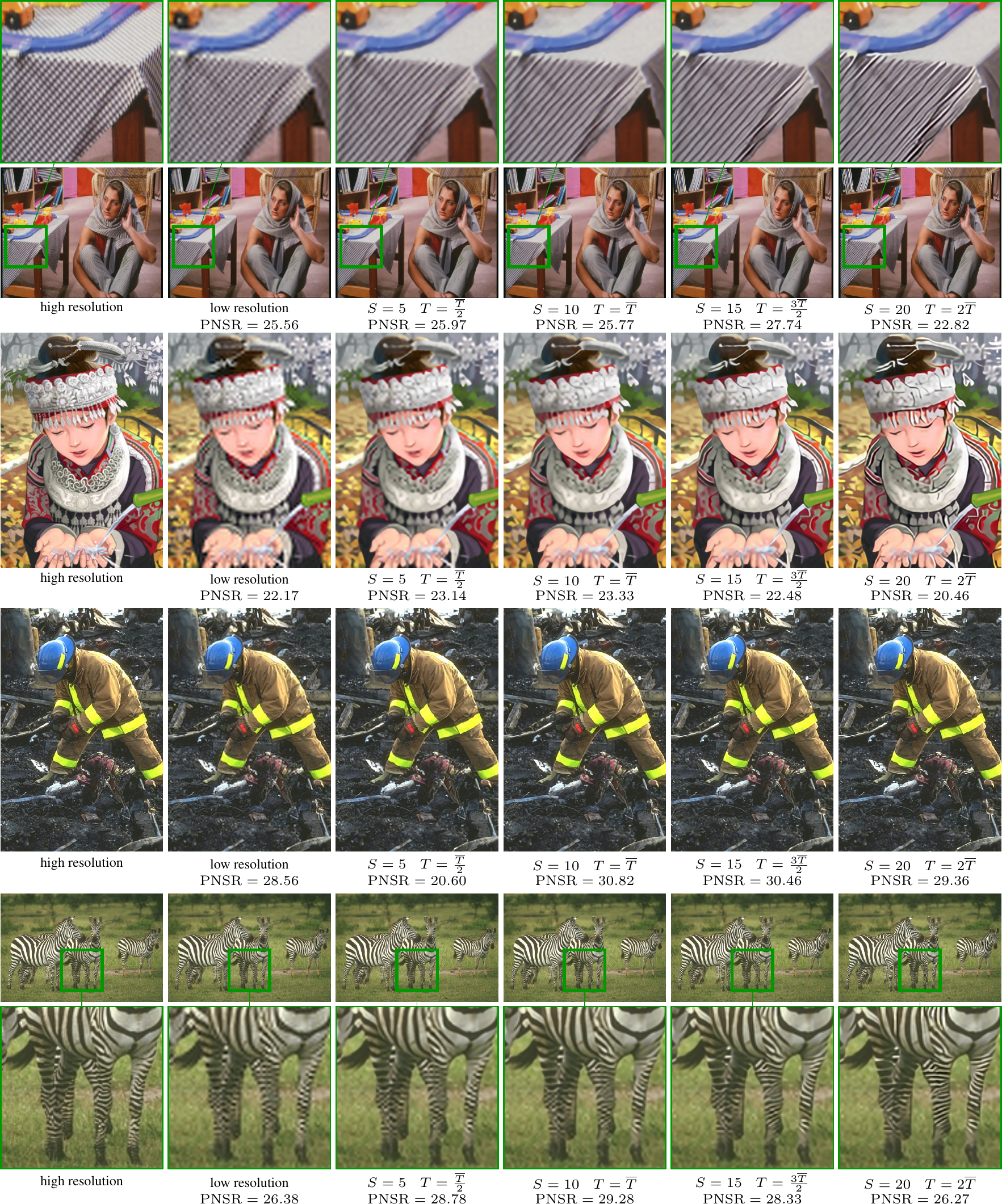}
\caption{
From left to right: High resolution, low resolution with scale factor~$4$, and resulting output of TDV$^3$ for $(S,T)\in\{(5,\frac{\overline{T}}{2}),(10,\overline{T}),(15,\frac{3\overline{T}}{2}),(20,2\overline{T})\}$,
where the optimal stopping time is $\overline{T}=0.098$.
}
\label{fig:sr}
\end{figure}

\section{Transferring TDV to different linear inverse problems}
In this section, we elaborate on how the prior information incorporated in the TDV regularizer can be transferred to different linear inverse problems \emph{without} any adaption of the learnable parameters.

As before, we consider minimizing the variational energy
\begin{equation}
\mathcal{E}(x,\theta,z)\coloneqq\mathcal{D}(x,z)+\mathcal{R}(x,\theta),
\label{eq:energy}
\end{equation}
in which the problem specific data fidelity term 
\begin{equation}
\mathcal{D}(x,z) = \frac{\lambda}{2}\Vert Ax-z\Vert_2^2
\end{equation}
is used to estimate an image~$x\in\R^{nC}$ from observed measurements~$z\in\R^{lC}$.
The linear operator~$A\in\R^{lC\times nC}$ models the data acquisition process of the inverse problem at hand.
The scalar weight~$\lambda>0$ allows to manually balance the weighting between data fidelity and regularization.
To incorporate prior knowledge in the reconstruction process~\eqref{eq:energy}, we consider the smooth TDV regularizer~$\mathcal{R}(x,\theta)$.
Here, we assume that the TDV parameters~$\theta$ have already been determined by a previous training process for a different inverse problem (e.g. image denoising) as presented in the paper.
As a result, we are able to transfer the regularization strategies obtained by training from data on a different task to another specific restoration/reconstruction problem by just using the learned regularizer.

For given measurements~$z\in\R^{lC}$ and previously computed trainable parameters~$\theta\in\Theta$ we retrieve the restored image~$x$ via
\begin{equation}
\argmin_{x\in\R^{nC}}\mathcal{E}(x,\theta,z).
\label{eq:min}
\end{equation}
Since both the data fidelity term and the regularization term are smooth, any convergent first-order gradient method is suitable for optimizing~\eqref{eq:min}.
Algorithm~\ref{algo:agdlb} lists the applied accelerated gradient descent method with Lipschitz backtracking.
We adopt a Lipschitz backtracking strategy to approximate the local Lipschitz constant.
Algorithm~\ref{algo:agdlb} is known to converge to stationary points of~\eqref{eq:min} if the number of iterations~$K$ is chosen large enough~\cite{ChPo16,PoSa16}.
\begin{algorithm}[t]
\SetInd{1ex}{1ex}
\For{$k=1$ \KwTo $K$}{
\tcc{over-relaxation}
$\widehat{x}_k=x_k+\tfrac{1}{\sqrt{2}}(x_k-x_{k-1})$\;
\While{True}{
$x_{k+1}=\widehat{x}_k-\frac{1}{L}\nabla_1\mathcal{E}(\widehat{x}_k,\theta,z)$\;
\tcc{Lipschitz backtracking}
$q\!=\!\langle x_{k+1}-\widehat{x}_k,\nabla\mathcal{E}(\widehat{x}_k,\theta,z)\rangle+\frac{L}{2}\Vert x_{k+1}-\widehat{x}_k\Vert_2^2$\;
\If{$\mathcal{E}(x_{k+1},\theta,z)\leq\mathcal{E}(\widehat{x}_k,\theta,z)+q$}{
$L=\frac{L}{2}$\;
\Break\;
}
\Else{
$L=2L$\;
}
}
}
\caption{Accelerated gradient descent with Lipschitz backtracking.}
\label{algo:agdlb}
\end{algorithm}

To sum up, the idea of reusing prior knowledge extracted from data for a different inverse problem by means of a regularizer enables a simple and effective approach that does not require any further training.
Next, we demonstrate the applicability of this idea by applying a regularizer learned for gray-scale image denoising to computed tomography (CT) and magnetic resonance imaging (MRI) reconstruction.

\section{Computed tomography}
To showcase the effectiveness of the transfer idea introduced in the previous section, we consider the linear inverse problem of angular undersampled two dimensional computed tomography (CT).
The task of CT is to reconstruct an image given projection measurements for different acquisition angles.
All these measurements are stacked along the angular dimension to form the sinogram~$z$.
In practice typically $R=2304$ acquisition angles with $768$ projections are considered, thus $z\in\R^{768R}$.
In the case of angular undersampled CT~\cite{ChTa08} only a fraction of the acquisition angles are measured.
Here, the goal is to reconstruct the image with a similar quality by incorporating prior knowledge in the form of regularization.

The angular undersampled CT problem can be addressed by considering the data fidelity term
\begin{equation}
\mathcal{D}(x,z)=\frac{\lambda}{2}\Vert A_Rx - z\Vert_2^2,
\end{equation}
where $A_R: \R^{R\cdot768\times768\cdot768}$ is the linear projection operator acquiring $R$ angles introduced in~\cite{HaMu18} and $z=A_Ry$ is the measured sinogram of the true image~$y$.
We use the TDV$_{25}^3$ regularizer trained for image denoising and $S=10$ without changing its learnable parameters.

We evaluate the qualitative performance of the proposed regularizer transfer approach on a sample slice of the MAYO dataset~\cite{McBa17}.
The reconstructed images from the fully-sampled, 4/8-fold undersampled sinogram are depicted in Figure~8 in the paper.
It is clearly visible that the TDV$_{25}^3$ regularizer yields reconstructed images of similar quality.
Consequently, the regularizer is able to remove the undersampling artifacts while preserving fine details.

\section{Magnetic resonance imaging}
In this section, we demonstrate the wide-ranging usability of the proposed approach by transferring the TDV regularizer trained for image denoising
to parallel accelerated MRI, which exhibits strong and structured undersampling artifacts.

In detail, in parallel accelerated MRI only a subset of the k-space data of each coil is measured during the acquisition process (for details see~\cite{HaKl18}).
The task in parallel accelerated MRI is the reconstruction of the scanned image~$x$ given the undersampled k-space data~$z_i$ of each receiver coil~$i=1,\ldots,N_C$. 
To account for multiple coils, we change the data fidelity term to
\begin{equation}
\mathcal{D}(x,\{z\}_{i=1}^{N_C})=\frac{\lambda}{2}\sum_{i=1}^{N_C}\Vert M_RFC_ix - z_i\Vert_2^2.
\end{equation}
The linear operator~$C_i\in\C^{n\times n}$ weights the complex image estimate by the coil sensitivity map of the $i^{th}$ coil, which was computed by~\cite{UeLa14}, $F\in\C^{n\times n}$ is the Fourier transform, and $M_R\in\C^{n\times n}$ is an binary mask for $R$-fold undersampling.
Here, we consider Cartesian undersampled k-space data as in~\cite{HaKl18}.
As in the previous sections, the scalar weight~$\lambda$ needs to be adapted to balance data fidelity and regularization.
Despite the large difference between noise artifacts and the undersampling artifacts introduced by Cartesian undersampling, we use the TDV$_{25}^3$ regularizer trained for image denoising and $S=10$ to regularize this linear inverse problem.

We perform a qualitative evaluation of the proposed approach on a representative slice of an undersampled MRI knee image.
The slice has a resolution of $n=320\cdot320$ pixels, and $N_C=15$ receiver coils were used during the acquisition.
Figure~9 in the paper depicts the results of the accelerated parallel MRI problem using the TDV$_{25}^3$ regularizer.
Although the TDV regularizer was not trained to account for undersampling artifacts, all these artifacts are removed in the reconstructions and only some details in the bone are lost.
This highlights the versatility and effectiveness of the proposed TDV regularizer since both challenging medical inverse problems can be properly addressed \emph{without} any fine-tuning of the learned parameters.

{\small
\bibliographystyle{alpha}
\bibliography{references}
}

\end{document}